\documentclass[reqno]{amsart}

\usepackage{amsmath,amssymb}
\usepackage{amsthm}
\usepackage{proof}
\usepackage{stmaryrd}
\usepackage{color}
\usepackage{lineno}
\usepackage{hyperref}
\usepackage{graphicx}
\usepackage{enumitem}
\usepackage{mathrsfs}

\newcommand{\basis}[1]{\mathfrak{#1}}
\newcommand{\base}[1]{\mathscr{#1}}
\newcommand{\baseB}{\base{B}}
\newcommand{\supp}{\Vdash}
\newcommand{\basesup}[1]{\supseteq_{#1}}
\newcommand{\proves}{\vdash}

\newcommand{\setIndCons}{\mathrm{ICONS}}
\newcommand{\setPredCons}{\mathrm{PCONS}}
\newcommand{\setIndVar}{\mathrm{IVAR}}
\newcommand{\setPredVar}{\mathrm{PVAR}}
\newcommand{\setAtom}{\mathrm{ATOM}}
\newcommand{\setClosedAtom}{\overline{\mathrm{ATOM}}}
\newcommand{\setFormula}{\mathrm{FORM}}

\newcommand{\FIV}[1]{\mathrm{FIV}(#1)}
\newcommand{\FPV}[1]{\mathrm{FPV}(#1)}

\newcommand{\foralltwo}{\raisebox{1.5ex}{\rotatebox{180}{$\mathbb{A}$}}}

\newcommand{\existstwo}{\raisebox{1.5ex}{\rotatebox{180}{$\mathbb{E}$}}}

\newcommand{\atset}[1]{\mathbb{#1}}

\newcommand{\calculus}[1]{\mathsf{#1}}
\newcommand{\rn}[1]{\mathsf{#1}}
\newcommand{\dne}{\rn{DNE}}
\newcommand{\efq}{\rn{EFQ}}
\newcommand{\irn}[1]{\rn{#1}\mathsf{I}}
\newcommand{\ern}[1]{\rn{#1}\mathsf{E}}

\newcommand{\flatop}[1]{#1^\flat}
\newcommand{\natop}[1]{#1^\sharp}

\newcommand{\eigeninds}{\mathrm{EIND}}
\newcommand{\eigenpreds}{\mathrm{EPRED}}

\usepackage{amsthm}

\newtheorem{definition}{Definition}
\newtheorem{theorem}[definition]{Theorem}

\newtheorem{proposition}[definition]{Proposition}
\newtheorem{lemma}[definition]{Lemma}
\newtheorem*{theorem*}{Theorem}

\title[Proof-theoretic Semantics for Second-order Logic]{Proof-theoretic Semantics \\ for Second-order Logic}

\author{Alexander V. Gheorghiu}
\address{School of Electronics and Computer Science, University of Southampton}
\email{a.v.gheorghiu@soton.ac.uk}

\author{David J. Pym}
\address{Institute of Philosophy and UCL, 
University of London}
\email{david.pym@sas.ac.uk, d.pym@ucl.ac.uk}

\thanks{2010 Mathematics Subject Classification (MSC) codes: 03F05, 03B15, 03F03}

\date{}
\begin{document}

\begin{abstract}
    We develop a proof-theoretic semantics (P-tS) for second-order logic (S-oL), providing an inferentialist alternative to both full and Henkin model-theoretic interpretations. Our approach is grounded in base-extension semantics (B-eS), a framework in which meaning is determined by inferential roles relative to atomic systems --- collections of rules that encode an agent's pre-logical inferential commitments. We show how both classical and intuitionistic versions of S-oL emerge from this set-up by varying the class of atomic systems. These systems yield modular soundness and completeness results for corresponding Hilbert-style calculi, which we prove equivalent to Henkin’s account of S-oL. In doing so, we reframe second-order quantification as systematic substitution rather than set-theoretic commitment, thereby offering a philosophically lightweight yet expressive semantics for higher-order logic. This work contributes to the broader programme of grounding logical meaning in use rather than reference and offers a new lens on the foundations of logic and mathematics.
\end{abstract}

\keywords{second-order logic, proof-theoretic semantics, base-extension semantics, inferentialism, soundness, completeness}

\maketitle

\section{Introduction} \label{sec:introduction}

%%%% SOL

Second-order logic (S-oL) occupies a nuanced position in the foundations of logic and mathematics. Syntactically, it extends first-order logic (F-oL) by permitting quantification not only over individuals but also over properties, relations, and sets. Semantically and philosophically, however, it is often regarded as sitting between F-oL and full set theory: more expressive than the former, yet more constrained than the latter. As V\"a\"an\"anen~\cite{Vaananen2001} observes, this duality --- being stronger than F-oL yet seemingly weaker than set theory --- has long animated debates in logic and the philosophy of mathematics. He further notes that realizing the full strength of S-oL, especially in giving content to expressions like `for all properties', requires invoking a set-theoretic background. This dependency raises foundational concerns: it seems to compromise S-oL’s logical autonomy by binding it to a certain mathematical metaphysics that one might have sought logic to justify.

Two primary approaches to the semantics of S-oL reflect this tension: standard semantics (also known as full semantics) and Henkin semantics. These approaches differ substantially in scope and foundational implications. Standard semantics supports the use of S-oL as a tool for categorically axiomatizing mathematical structures, while Henkin semantics enables its treatment as a syntactically well-behaved extension of F-oL. 

Under standard semantics, second-order quantifiers range over the entire power set (or relation set) of the domain. Properties and relations are thus treated as genuinely higher-order entities, modelled as sets or functions in a set-theoretic universe. This makes S-oL categorically powerful --- capable of uniquely characterizing structures like the natural numbers and the real number continuum --- but at a cost: the logic is incomplete, non-compact, and lacks any sound, recursively enumerable axiomatization. It thereby inherits the complexities of set-theoretic reasoning.

Henkin semantics, by contrast, restricts second-order quantifiers to range over a designated collection of subsets or relations, not necessarily the full power set. In this framework, S-oL is effectively a many-sorted or higher-order extension of F-oL. It admits a sound and complete deductive system and satisfies the compactness and L\"owenheim–Skolem theorems. However, it loses much of the expressive and categorical strength that distinguishes S-oL in the standard setting. Notably, categoricity results for second-order Peano arithmetic or analysis no longer hold.

This paper proposes a new perspective on S-oL. Both standard and Henkin semantics are versions of \textit{model-theoretic semantics} (M-tS) for S-oL. They are based in the philosophical position of \textit{denotationalism}, where meaning and validity are grounded in reference and representation. This is what irrevocably connects them to set theory. In this paper, we provide an alternate account altogether based in terms of \emph{proof-theoretic semantics} (P-tS). \medskip 

The philosophical background to P-tS is \textit{inferentialism}. Broadly speaking, the meaning of an expression is given entirely by the role that expression plays in inference (rather than, for example, its truth conditions). It has a long intellectual history and can be seen as a particular interpretation of Wittgenstein's `meaning-as-use' principle~\cite{wittgenstein2009philosophical} in which `use' is given by inference. Its formulation begun with Gentzen's belief that introduction rules are definitional~\cite{Gentzen1969} and  Dummett's~\cite{Dummett1991logical} anti-realist account of logic. It was later developed into a theory of meaning by Brandom~\cite{brandom2009articulating}.

% There are several branches of research within P-tS. We defer to the discussion on proof-theoretic validity (P-tV) in the Dummett-Prawitz tradition by Schroeder-Heister~\cite{Schroeder2006validity}. In this paper, we concentrate on \textit{base-extension semantics} (B-eS) in the tradition of Piecha et al.~\cite{SanzPiechaSchroeder2015,Piecha2016,SchroederHeister2019} and Sandqvist~\cite{Sandqvist2015hypothesis,Sandqvist2009CL,Sandqvist2015,Sandqvist2005inferentialist}. There are deep relationships between P-tV and B-eS, as shown by Gheorghiu and Pym~\cite{GheorghiuPym2024ptv-bes}. For this paper, we shall be using B-eS, but, we first place this in the context of P-tV, the earlier approach. 

The inferentialist approach to meaning is quite natural. What does the proposition `Tammy is a vixen' mean? Intuitively, it means `Tammy is female' and `Tammy is a fox'. In particular, from the fact that Tammy is a vixen, we can infer both that she is a fox and that she is female. In the standard format of proof rules, we have:
        \[
            \dfrac{\text{Tammy is a vixen}}{\text{Tammy is a fox}}
            \qquad \mbox{\text{and}} \qquad  
            \dfrac{\text{Tammy is a vixen}}{\text{Tammy is female}}
        \]
    Similarly, if we have that Tammy is a fox and that Tammy is female, then we may infer that she is a vixen:
        \[
            \dfrac{\text{Tammy is female}
            \quad \text{Tammy is a fox}}
            {\text{Tammy is a vixen}}  
        \]
    The inferentialist claim is that in providing these rules, we have provided the meaning of the expression.  Observe the paradigmatic shift from denotationalism to inferentialism: rather than interpreting `Tammy' and `vixen' in a model, we express their meaning though inferential behaviours.   

That explains the inferentialism as a theory of meaning in natural language. Within logic this conception of meaning required grounding validity in `proof' rather than `truth'. This requires some unpacking as `proof' here cannot mean proof in a given deduction system since it must be pre-logical in order to define the logical signs. Therefore we require a pre-logical notion of proof. Such an account has been given by Prawitz~\cite{Prawitz1971ideas}.

To begin, let us consider how we might express the example concerning Tammy in a symbolic language? This requires fixing an idea of rules over atomic propositions (i.e., propositions without any logical signs).  Using natural deduction in the form of Gentzen~\cite{Gentzen1969}, we take rules of the form,
    \[
     \infer{~~C~~}{}  \qquad 
    \infer{C}{P_1 & \ldots & P_n}  \qquad 
        \infer{C}{\deduce{P_1}{[\mathbb{P}_1]} &   \ldots    & \deduce{C}{[\mathbb{P}_n]}}
    \]
    Here $C$, $P_1,...,P_n$ are all atomic propositions and $\mathbb{P}_1,...,\mathbb{P}_n$ are sets theoreof. The latter rule allows each of the $p_i$s to be proved from a set of discharged hypotheses $\mathbb{[P_i]}$. Given such rule format, it is clear that the example above concerning Tammy can be encoded with the rules
    \[
        \infer{Fe(t)}{V(t)} \qquad \infer{Fo(t)}{V(t)} \qquad \infer{V(t)}{Fe(t) & Fo(t)}
    \]
    in which $Fe$ is the predicate `is female', $Fo$ is the predicate `is a fox', $V$ is the predicate `is a vixen' and $t$ stands for `Tammy'. 
    
    Piecha and Schroeder-Heister~\cite{Piecha2017definitional,Schroeder2016atomic} and Sandqvist~\cite{Sandqvist2015hypothesis} have considered the philosophical commitments made in the choice of format for atomic rule. A collection of such atomic rules is called an `atomic system' or a `base'. We think of such bases $\base{B}$ as the set of inferential commitments that an agent poses and that these commitments represents their semantic universe.  

    Gentzen~\cite{Gentzen1969} observed that rules may be thought to explicate the meaning of logical constants in this way. For example, the rules 
    \[
    \infer{\phi}{\phi \land \psi} \qquad \infer{\psi}{\phi \land \psi} \qquad  \infer{\phi \land \psi}{\phi & \psi} 
    \]
    intuitively define conjunction in the same way that the rules above define `Tammy is a vixen'. Indeed, Gentzen~\cite{Gentzen1969} specifically thought that the introduction rules of his natural deduction systems define the logical signs and that the eliminations rules followed by some principle. Prawitz~\cite{Prawitz1971ideas} used his normalization results to deliver on this idea. We provide a terse account here and defer to Schroeder-Heister~\cite{schroeder2006validity} for a more complete picture. 
    
    Rather than dealing with the semantics of logical constants themselves, Prawitz~\cite{Prawitz1971ideas} first ask what makes a `proof' valid? This is because a corollary of his normalization theory is that proofs in Gentzen's $\mathsf{NJ}$, without loss of generality, end with the introduction rules.
    Let $\mathsf{L}$ be a system of proof rules and let ${\bf J}$ be a procedure on proof-structures that yields proof-structures (e.g., Prawitz's reductions~\cite{Prawitz2006natural} for normalizing proofs). Define validity relative to a base $\base{B}$ and the procedures ${\bf J}$ as follows:
    \begin{itemize}[label=--]
        \item There is an \emph{a priori} collection ${\bf C}(\mathsf{L})$ of $\langle \base{B}, {\bf J} \rangle$-valid proofs of $\mathsf{L}$.  
        \item A completed proof-structure $\Phi$ is $\langle \base{B}, {\bf J} \rangle$-valid if ${\bf J}$ can be applied to $\Phi$ to yield an element of ${\bf C}(\mathsf{L})$. 
        \item An incomplete proof-structure $\Phi$ is $\langle \base{B}, {\bf J} \rangle$-valid if any completion of it is $\langle \base{C}, {\bf J} \rangle$-valid for any $\base{C} \supseteq \base{B}$.
        \end{itemize}
    Prawitz~\cite{Prawitz1971ideas} conjectured that in his set-up (i.e., $\mathsf{L}$ as $\mathsf{NJ}$ and ${\bf J}$  as his reduction) a proof-structure $\Phi$ would be $\langle \base{B}, {\bf J} \rangle$-valid for arbitrary $\base{B}$ iff the conclusion of $\Phi$ is valid in intuitionistic logic. This turns out to be false. 

     Before continuing, we would like to briefly remark on the treatment of `incomplete' proof-structures. For Prawitz~\cite{Prawitz1971ideas}, these were open natural deduction derivations --- that is, derivations with undischarged hypotheses. Schroeder-Heister~\cite{Schroeder2007modelvsproof} has explained that Prawitz's semantics is closely related to the BHK-interpretation of intuitionism. Intuitively, a `construction' of an implication $\phi \to \psi$ is one that given a construction of $\phi$ yields a construction of $\psi$. However, such a condition on a construction of $\phi \to \psi$ would be satisfied vacuously if there be no construction of $\phi$ (relative to $\base{B}$ and ${\bf J}$). It follows (cf. Kripke's semantics~\cite{kripke1965semantical}) that we should consider arbitrary extensions $\base{C} \subseteq \base{B}$ that enrich our `understanding' so that we have constructions for $\phi$. 

We thus have a semantics of proofs. From this we get a semantics of the logical constants \emph{in terms of} proofs. A formula $\phi$ is $\base{B}$-valid iff, for any base $\base{B}$, there is a $\langle \base{B}, {\bf J} \rangle$-valid proof-structure $\Phi$ concluding $\phi$. Let's write $\supp_{\base{B}} \phi$ (read `$\base{B}$ supports $\phi$') to denote this situation. Depending on the specifics of the set-ups (e.g., the choice of ${\bf J}$), we may unfold this into a set of clauses. For example, Prawitz's~\cite{Prawitz1971ideas} set-up of disjunction may given by the clause:
\[
\supp_{\base{B}} \phi \lor \psi \qquad \mbox{iff} \qquad \supp_{\base{B}} \phi \mbox{ or }  \supp_{\base{B}} \psi 
\]
That is, there is a $\langle \base{B}, {\bf J} \rangle$-valid proof $\Theta$ concluding $\phi \lor \psi$ iff there is a $\langle \base{B}, {\bf J} \rangle$-valid proof $\Phi$ concluding either $\phi$ and $\psi$. This follows from the fact that normalized $\mathsf{NJ}$-proofs concluding a disjunction end by $\lor$-introduction and so contain a sub-proof of one of the disjuncts. 

Piecha et al.~\cite{SanzPiechaSchroeder2015,Piecha2016,SchroederHeister2019} studied the judgment $\supp_{\base{B}}$ that arises from systematically writing its clauses like this. The implication is handled using base-extension following the remark above.  That is, we have
\[
\supp_{\base{B}} \phi \to \psi \qquad \mbox{iff} \qquad \phi \supp_{\base{B}} \psi
\]
where, for $\Delta \neq \emptyset$,
\[
\Delta \supp_{\base{B}} \psi \qquad \mbox{iff} \qquad \mbox{for all $\base{C} \supseteq \base{B}$, if $\supp_{\base{C}} \psi$ for $\psi \in \Gamma$, then $ \supp_{\base{C}} \psi$}
\]
At first glance, this semantics appear to recall Kripke's semantics for intuitionistic logic~\cite{kripke1965semantical} using bases $\base{B}$ for worlds. This is impression is misleading. Piecha et al.~\cite{SanzPiechaSchroeder2015,Piecha2016,SchroederHeister2019} discovered that the resulting semantics validates the Kreisel-Putnam (KP) axiom. That is, for any $\base{B}$,
\[
\supp_{\base{B}} \big(a \to (b \lor c)\big) \to \big((a \to b) \lor (a \to c) \big)
\]
However, KP is not intuitionistically valid and, therefore, this is not a semantics for intuitionistic logic and Prawitz's conjecture fails. Stafford~\cite{Stafford2021} later showed that this actually provides a semantics for an intermediate logic he called \emph{generalized inquisitive logic}. \medskip

% While P-tV deals in the validity of proofs, it does so without considering directly the validity of formulae. The second line of development, which we call \emph{base-extension semantics} (B-eS), is concerned with the validity of formulae.     
%     Sandqvist \cite{Sandqvist2015} has given an elegant B-eS for intuitionistic propositional logic. This analysis demonstrates very clearly the basic principles of B-eS.  

%     The basic idea can be articulated conveniently by comparison with the model-theoretic definition of the validity of formulae. In the Kripke semantics of intuitionistic propositional logic, based on ordered worlds, meaning is defined inductively, as follows: the truth of an atomic formula at a world is determined by its interpretation in a model; that is, the atom is true at the world just in case the world is an element of the valuation of the atom. The meaning of the remaining connectives is then defined inductively, with the meaning of implicational formulae requiring, analogously to the requirement for base-extensions described above, judgements relative to worlds higher in the ordering. 

In parallel to this approach to P-tS, Sandqvist~\cite{Sandqvist2015} developed a similar `support' ($\supp$) relation. The biggest difference is the treatment of disjunction for which he uses the following clause instead:
\[
\supp_{\base{B}} \phi \lor \psi \qquad \text{iff} \qquad \forall \base{C} \basesup{} \base{B}, \forall P, \text{ if } \phi \supp_{\base{C}} P \text{ and } \psi \supp_{\base{C}} P, \text{ then } \supp{\base{C}} P 
\]
% It recalls the second-order definition given by Prawitz~\cite{Prawitz2006natural} standard in type-theoretic settings:
% \[
% X + Y = \Pi Z. (X \to Z) \to ((Y \to Z) \to Z)
% \]
The semantics is summarized in Figure~\ref{fig:sandqvist} where $\proves_{\base{B}}$ denotes derivability in the $\base{B}$ and $\Delta$ is a non-empty set of formulae.  He  established the soundness and completeness of $\mathsf{NJ}$ \cite{Gentzen1969} ($\vdash$) with respect to this semantics. For finite $\Gamma$,
\begin{itemize}
    \item[--] Soundness: $\Gamma \vdash \phi$ implies $\Gamma \Vdash \phi$
    \item[--] Completeness: $\Gamma \Vdash \phi$ implies $\Gamma \vdash \phi$.
\end{itemize}
  Pym et al.~\cite{PymRitterRobinson2024} have shown that this semantics is fully natural in the sense of categorical logic. Importantly, for these theorems to hold, the set of atoms ($P$s) is assumed to be denumerably infinite and we must admit all atomic rules of the forms given above including discharge.  Sandqvist~\cite{Sandqvist2005inferentialist,Sandqvist2009CL} (see also Makinson~\cite{makinson2014inferential}) also showed that when restricting to atomic systems that do not include discharge, the result yields classical logic.

\begin{figure}
    \centering
    \hrule
    \begin{align*}
&\supp_{\base{B}} P                  && \text{iff} && \proves_{\base{B}} P                                              && \hfill(\text{At})\\
&\supp_{\base{B}} \phi \land \psi  && \text{iff} && \supp_{\base{B}} \phi \text{ and } \supp_{\base{B}} \psi          && \hfill(\land)\\
&\supp_{\base{B}} \phi \lor \psi   && \text{iff} && \forall \base{C} \basesup{} \base{B}, \forall P, \text{ if } \phi \supp_{\base{C}} P \text{ and } \psi \supp_{\base{C}} P, \text{ then } \supp_{\base{C}} P          && \hfill(\lor)\\
&\supp_{\base{B}} \bot               && \text{iff} && \supp_{\base{B}} P \text{ for any atom } P                        && \hfill(\bot)\\
&\supp_{\base{B}} \phi \to \psi    && \text{iff} && \phi \supp_{\base{B}} \psi                                        && \hfill(\to)\\
\Delta &\supp_{\base{B}} \phi        && \text{iff} && \forall \base{C} \basesup{\mathfrak{B}} \baseB,\;
    \text{if }\supp_{\base{C}}\psi \text{ for all }\psi\in\Delta,\;
    \text{then }\supp_{\base{C}}\phi                                         && \hfill(\text{Inf})
    % &\Gamma\supp_{} \phi                     && \text{iff} && \Gamma \supp_{\base{B}} \phi \text{ for all } \base{B}\in\mathfrak{B} &&
\end{align*}
\hrule
    \caption{Sandqvist's Base-extension Semantics}
    \label{fig:sandqvist}
\end{figure}

\medskip

Having sketched the background to, and key aspects of, P-tS, we can ask why does having a P-tS for S-oL matter? As mentioned, the standard interpretations of S-oL inherit not just technical features but also the philosophical burdens of set theory because of its grounding in denotationalism. For example, the expression `for all properties' presupposes a background ontology of sets or relations --- an assumption that ties the logic to precisely the kind of mathematical metaphysics it might otherwise have been expected to illuminate. Even Henkin semantics, while more proof-theoretically tractable, remains embedded in a model-theoretic framework, treating rules of inference as subordinate to pre-given semantic structures. From an inferentialist standpoint, this order of explanation is inverted: what gives logical expressions their meaning is not their denotation, but their use in inference.

A P-tS for S-oL thus serves a dual purpose. Philosophically, it offers an anti-realist account of higher-order logic that avoids the ontological commitments of standard semantics. Rather than appealing to a universe of sets, it aims to ground the meaning of second-order quantification in the roles such quantifiers play in deduction. Technically, it reframes S-oL not as a logic whose expressiveness must be justified by set-theoretic resources, but as a system whose coherence and content can be understood internally, through the rules governing introduction, elimination, and transformation.

Moreover, such a perspective opens the possibility of treating S-oL as a kind of inferentialist analogue to set theory. The expressive power that allows S-oL to formulate comprehension, categoricity, and induction principles need not be seen as a reflection of ontological depth; rather, it can be reconstructed as a consequence of the inferential structure of the logic itself. This resonates with long-standing intuitions that much of mathematics --- particularly arithmetic and analysis --- can be grounded in logical reasoning, without presupposing a realist ontology of sets.

Finally, S-oL plays a crucial role as a metalanguage: it is often the formal system within which we specify semantics, express reflection principles, or formulate general logical theories. But if the semantics of this metalanguage is itself given only model-theoretically, we risk circularity or infinite regress. P-tS offers a way of internalizing the metalanguage, grounding its expressions in rules rather than in external models. In this way, S-oL becomes not just an object of inferentialist analysis, but a vehicle for advancing the broader project of P-tS as a foundational framework.

In Section~\ref{sec:sol}, we introduce the syntax of second-order logic and present a proof-theoretic semantics using base-extension semantics. This framework interprets logical expressions through their inferential role relative to a class of atomic systems. In Section~\ref{sec:pt}, we develop Hilbert-style calculi for classical and intuitionistic S-oL and prove soundness and completeness theorems with respect to our semantics. Section~\ref{sec:henkin} demonstrates the equivalence of these systems with Henkin-style natural deduction calculi, thereby showing that our proof-theoretic semantics recovers standard S-oL in its Henkin form. We conclude in Section~\ref{sec:discussion} by reflecting on the philosophical significance of our approach and outlining directions for further research, including the challenge of recovering full second-order semantics within the inferentialist paradigm. \medskip

\section{Second-order Logic}~\label{sec:sol}

The P-tS of S-oL in this paper builds on that of F-oL provided by Gheorghiu 
\cite{Gheorghiu2025fol}.  In the first-order setting, the key steps are the following: 
\begin{itemize}
\item[--] Defining an appropriate notion of base to ground the semantics. Atomic rules are defined over closed atomic formulas only. Using open formulas does not increase the expressive power of the semantics.
\item[--] Providing semantics clauses for the quantifiers $\forall$ and $\exists$. The signs are read literally over the syntax --- for example, $\forall x \phi$ is understood to mean any instance of $\phi$ with $x$ replaced by a closed term $t$. 
\item[--] Proving sound and completeness with respect to deduction systems. It adapts a technique by Sandqvist~\cite{Sandqvist2015} requiring careful handling of discharge and using eigenvariables (fresh constants) to handle the quantifiers.
\end{itemize}
The treatment of S-oL adopts and simplifies this set-up. An important difference is that we do not only consider predicates $P(-,\ldots,-)$ with positive arity, but also propositional atoms $P$. This enables us to give a cleaner and more contained treatment of the semantics and the proof of completeness.

One feature that merits a remark is that the ontology is given by the syntax which is fixed before the logic is defined. By contrast, in the M-tS of quantifier logics the quantifiers are handled by a `model update' in which we name things in a given universe.  Let $\mathfrak{M}$ be a first-order structure over a domain $D$ and let $s$ be an assignment of variables. Then:
\[
\mathfrak{M}, s \models \forall x\, \phi(x) \quad \text{iff} \quad \text{for all } d \in D,\ \mathfrak{M}, s[x \mapsto d] \models \phi(x)
\]
That we have soundness and completeness results with a fixed ontology may be surprising to those holding on to a denotational or realist account of meaning. It is, however, as expected in the inferentialist, anti-realist character of P-tS. 

% The clauses governing the quantifiers remain as in F-oL based on substitution but this time we only require the universal quantifiers  --- viz. the first-order $\forall$ and second-order $\foralltwo$. Allowing propositional atoms in the syntax means that the disjunctive signs $\bot$, $\lor$, and $\exists$ may be defined in terms of $\forall$ and $\to$ as suggested by Prawitz~\cite{Prawitz2006natural}; for example,
% \[
% \phi \lor \psi := \foralltwo X^0 (\phi \to X^0 \land \psi \to X^0 \to X^0 )
% \]
% where $X^0$ is a propositional variable.
 
\subsection{Syntax of Second-order Logic}

To begin, we fix disjoint denumerable sets of symbols for every $n \geq 0$:
\begin{itemize}
    \item[--]$\setIndCons$ --- individual-constants $a, b, c, \ldots$
    \item[--]$\setIndVar$ --- individual-variables $x, y, z, \ldots$
    \item[--]$\setPredCons_n$ --- predicate-constant of arity $n$, $P,Q,R,\ldots$
    \item[--]$\setPredVar_n$ --- predicate-variables of arity $n$, $X,Y,Z,\ldots$
\end{itemize}
 We define the following sets as the collections of all predicate-constants and predicate-variables, respectively:
\[
\setPredCons  := \bigcup_n \setPredCons_n 
\qquad
\setPredVar  := \bigcup_n \setPredVar_n 
\]
We may write $P^{(n)}$ (resp. $X^{(n)}$) to denote that $P$ is a predicate-constant (resp. predicate-variable) of arity $n$. Given a predicate-constant $P$ or predicate-variable $X$, we may write $\alpha(P)$ and $\alpha(X)$ to denote their arities.

The elements $\setIndCons \cup \setIndVar$ are \emph{terms}. The atomic formulae are defined by the following: 
\begin{itemize}
\item[--]If $t_1,\ldots,t_n$ are terms and $P \in \setIndCons_n$, then $R(t_1,\ldots,t_n)$ is an atom
\item[--]If $t_1,\ldots,t_n$ are terms and $X \in \setPredVar_n$, then $X(t_1,\ldots,t_n)$ is an atom
\end{itemize}
The set of all atoms is denoted $\setAtom$. We define formulae by the following grammar as $x$ ranges over $\setIndVar$ and $X$ ranges over $\setIndVar$:
\[
\phi ::= A \in \setAtom \mid \phi \to \phi \mid \forall x \phi \mid \foralltwo X\! \phi
\]
The set of all formulae is denoted $\setFormula$. We may use the following abbreviations:
\begin{itemize}
    \item[--]$\bot := \foralltwo X\!^{0}(X^{0})$
    \item[--]$\phi \land \psi := \foralltwo X\!^0((\phi \to (\psi \to X^0)) \to X^0)$
    \item[--]$\phi \lor \psi := \foralltwo X\!^0((\phi \to X^0 \land \psi \to X^0)\to X^0)$
    \item[--]$\exists x \phi := \foralltwo X\!^0((\forall x \phi \to X^0) \to X^0)$
    \item[--]$\exists X \phi := \foralltwo X\!^0((\foralltwo X\! \phi \to X^0) \to X^0)$
\end{itemize}

To handle quantifiers we will require the usual notion of \emph{free variable}. This concept is doubtless familiar (cf. van Dalen~\cite{van1994logic}) so we simply fix some notation:
\begin{itemize}
    \item[--] we may write $\FIV{\phi}$ to denote the set of free individual-variables in $\phi$
    \item[--] we may write $\FPV{\phi}$ to denote the set of free predicate-variables in $\phi$
\end{itemize}
The notations may be extended to sets through point-wise union; that is, 
\[
\FIV{\Gamma}:= \bigcup_{\phi \in \Gamma} \FIV{\phi} \qquad \text{ and } \qquad \FPV{\Gamma}:= \bigcup_{\phi \in \Gamma} \FPV{\phi}
\]

The point of individual- and predicate-variables is that they may be replaced by terms (of the appropriate type and arity). To this end, we require substitutions. We write $[x \mapsto t]$, where $x$ is an individual-variables and $t$ is a term, to denote the substition of $x$ by $t$:
  \[
    \phi [x \mapsto t] := \begin{cases}
        P(t_1[x \mapsto t], \ldots t_n[x \mapsto t]) &  \mbox{if } \phi = P(t_1, \ldots, t_n) \\         
         \forall y (\psi[x \mapsto t])  &  \mbox{if $\phi = \forall y \psi$ and $y \neq x$} \\ 
         \forall y \psi &  \mbox{if $\phi = \forall y \psi$ and $y = x$} \\
         \foralltwo X^n(\psi[x \mapsto t])  &  \mbox{if $\phi = \foralltwo X\!^n \psi$}
    \end{cases} 
    \]
Similarly, we may write $[X^n \mapsto P^n]$, where $X^n$ is predicate-variable of arity $n$ and $P$ is a predicate-constant of arity $n$, to denote substitution of $X^n$ by $P^n$:
     \[
    \phi [X^n \mapsto P^n] := \begin{cases}
        P(t_1, \ldots t_n) &  \mbox{if } \phi = X^n(t_1, \ldots, t_n) \\         
         \forall y (\psi[X^n \mapsto P^n])  &  \mbox{if $\phi = \forall y \psi$} \\ 
           \foralltwo Y^n (\psi[X^n \mapsto P^n])  &  \mbox{if $\phi = \foralltwo Y^n \psi$ and $Y^n \neq X^n$} \\ 
         \foralltwo Y^n \psi &  \mbox{if $\phi = \foralltwo Y^n \psi$ and $Y^n = X^n$} \\
    \end{cases} 
    \]
As these substitutions always respect arity, henceforth we may elide it so that in writing $[X \mapsto P]$ it is assumed that $X$ and $P$ have the same arity. 
 
\subsection{Base-extension Semantics}

The semantics is grounded by derivability in an \emph{atomic system}. An atomic system may be thought to represent an agent’s inferential commitments about some basic sentences. By `inference' here, we mean that if an agent is committed to the premises and the inference, then they are likewise committed to the conclusion. For example, Aristotle might accept the basic sentence
\[
\text{Socrates is human}
\]
as well as the inference
\[
\infer{\text{Socrates is mortal}}{\text{Socrates is human}}
\]
As a consequence of these commitments, Aristotle accepts the assertion `Socrates is mortal', as it follows from them. 

Working within predicate logic we are faced with a choice: should open atomic formulas count as atomic sentences or only closed ones? If so, one could naturally express general commitments such as `whoever is human is also mortal' using the inference:
\[
\infer{x \text{ is mortal}}{x \text{ is human}}
\]
However, we aim to model positions that capture inferential relationships between complete thoughts. For this reason, we restrict our attention to atomic \emph{sentences}. Accordingly, we represent generalizations such as the above by including, for each name \( t \), the inference:
\[
\infer{t \text{ is mortal}}{t \text{ is human}}
\]
Both approaches are mathematically viable, but we believe the use of closed atoms more faithfully reflects our intended interpretation. The equivalence in expressive power between the two approaches follows from Gheorghiu~\cite{Gheorghiu2025fol}.

% Sandqvist \cite{Sandqvist2025} discusses a range of issues with quantification in B-eS from the point of view of its role as a theory of meaning. It would be interesting, on another occasion, to consider such issues in the context of S-oL. 

% \david{How do we feel about Tor's points about quantification? That is, needing something more delicate than the set of closed terms? Perhaps a bit more pertinent in the 2nd-order setting?}
% \alex{I don't know. I don't think the difference between open and closed has any deep mathematical ramifications anyway.}

% Formally, a position is modelled as an atomic system: a collection of \emph{atomic rules} that record both factual and inferential commitments. The consequences of such a position are captured by a derivability relation.

\begin{definition}[Atomic Rule] \label{def:atomic-rule}
An \emph{atomic rule} is an expression of the form 
\[
\{\atset{P}_1 \Rightarrow P_1, \ldots, \atset{P}_n \Rightarrow P_n\} \Rightarrow P
\]
where each \(\atset{P}_i \subseteq \setClosedAtom\) and each \(P_i \in \setClosedAtom\). The rule is:
\begin{itemize}
    \item[--]\emph{zero-level} if \(n = 0\),
    \item[--]\emph{first-level} if \(n > 0\) and \(\atset{P}_1 = \cdots = \atset{P}_n = \emptyset\),
    \item[--]\emph{second-level} otherwise.
\end{itemize}
\end{definition}

\begin{definition}[Atomic System] \label{def:atomic-system}
An \emph{atomic system} \(\base{S}\) is a set of atomic rules.
\end{definition}

\begin{definition}[Derivability in an Atomic System] \label{def:der-base}
Let \(\base{S}\) be an atomic system. The derivability relation \(\atset{P} \proves_{\base{S}} P\) is defined inductively as follows:
\begin{itemize}
    \item[--]\textsc{(Ref)} If \(P \in \atset{P}\), then \(\atset{P} \proves_{\base{S}} P\).
    \item[--]\textsc{(App)} If \(\{\atset{P}_1 \Rightarrow P_1, \ldots, \atset{P}_n \Rightarrow P_n\} \Rightarrow P \in \base{S}\), and for each \(i\), \(\atset{P} \cup \atset{P}_i \proves_{\base{S}} P_i\), then \(\atset{P} \proves_{\base{S}} P\).
\end{itemize}
\end{definition}

Let's return to the example of Aristotle. His position can now be formalized as an atomic system \(\base{A}\) containing the rules
\[
 H(s) \qquad \mbox{and} \qquad H(s) \Rightarrow M(s)
\]
From this system, we can derive that \(\proves_{\base{A}} M(s)\), reflecting Aristotle’s inferential commitment to the claim that Socrates is mortal.

The structure of this formalism suggests an analogy to natural deduction, in the style of Gentzen~\cite{Gentzen1969}:
\[
\infer{P}{\deduce{P_1}{[\atset{P}_1]} \quad \cdots \quad \deduce{P_n}{[\atset{P}_n]}}
\]
This resemblance is confirmed by  Definition~\ref{def:der-base} which treats $\Rightarrow$ for atomic systems just as the horizontal bar is treated in natural deduction.  However, since rule application in this setting does not involve substitution, the framework is not exactly the same. A closer structural resemblance to hereditary Harrop formulas (cf. Gheorghiu and Pym~\cite{GheorghiuPym2023NAF}) or intuitionistic resolution (cf. Gheorghiu~\cite{Gheorghiu2025mints}).

We may refine the theory of atomic systems by fixing collections of them relative to which we work. Such collections result in different logics.  We defer to Piecha and Schroeder-Heister~\cite{Schroeder2016atomic, Piecha2017definitional} for further discussion on the philosophical difference between them.  Accordingly, we introduce the idea of a \emph{basis}, which specifies a set of atomic systems.

\begin{definition}[Basis]
    A basis $\basis{B}$ is a set of atomic systems.
\end{definition}

Given a basis $\basis{B}$, its elements are called \emph{bases} $\base{B}$. Once a basis is fixed, we always work relative to its bases. We therefore introduce the notion of \emph{base-extension}, a restricted version of superset (or `extension') of an atomic system that respects the given basis.

\begin{definition}[Base-extension]
    Given a basis $\basis{B}$, base-extension is the least relation satisfying the following:
    \[
    \base{Y} \basesup{\basis{B}} \base{X} \quad \mbox{iff} \quad \base{X}, \base{Y} \in \basis{B} \mbox{ and } \base{Y} \supseteq \base{X}
    \]
\end{definition}

Having fixed a basis, the meaning of the logical signs is given by clauses that collective define a semantic judgment called \emph{support} ($\supp$). We can justify the clauses in turn by the intended reading of the logical sign they define. Since  a base $\base{B}$ represents an agent’s beliefs expressed inferentially, an atom $A$ is said to be supported in $\base{B}$ just in case $A$ can be derived from those beliefs. This is captured formally as:
\[
\supp_{\base{B}} A \qquad \text{iff} \qquad \proves_{\base{B}} A
\]

An intuitive reading of implication $\phi \to \psi$ is this: An agent believes the implication iff, supposing they were also to believe $\phi$, would consequently believe $\psi$. We can express this condition schematically as:
\[
\supp_{\base{B}} \phi \to \psi \qquad \text{iff} \qquad \phi \supp_{\base{B}} \psi
\]
But what exactly do we mean by this? In general, we must specify what it should it mean for an agent with beliefs $\base{B}$ to conclude $\psi$ were they to commit to a (non-empty) set of formulae $\Delta$. The use of the subjunctive mood in articulating this condition suggests an appeal to hypothetical scenarios --- specifically, \emph{extensions} of the belief base that ensure support for every formula in $\Delta$. This motivates the notion of \emph{base extension}. Formally, we define:
\[
\Delta \supp_{\base{B}} \phi \qquad \text{iff} \qquad \text{for all } \base{C} \basesup{\basis{B}} \base{B},\ \text{if } \supp_{\base{C}} \psi \text{ for each } \psi \in \Delta,\ \text{then } \supp_{\base{C}} \phi
\]

It remains to consider quantification. Consider the statement `for any person, if that person is human, then they are mortal'. What we typically mean is that for any name we can introduce --- say, \emph{Socrates} --- the corresponding instance (e.g., `if Socrates is human, then Socrates is mortal') is one we accept. Here lies a key point of divergence between model-theoretic and proof-theoretic semantics: in model-theoretic accounts, quantifiers range over a realist ontology of actual individuals; in proof-theoretic semantics, the ontology is anti-realist, constructed from the names present in the language.

Accordingly, support for a universal formula is defined by substituting the quantified variable with any available individual constant:
\[
\supp_{\base{B}} \forall x\, \phi \qquad \text{iff} \qquad \supp_{\base{B}} \phi[x \mapsto a]\ \text{for all } a \in \setIndCons
\]
We treat second-order quantification analogously:
\[
\supp_{\base{B}} \foralltwo X\! \phi \qquad \text{iff} \qquad \supp_{\base{B}} \phi[X \mapsto P]\ \text{for all } P \in \setPredCons
\]
Strictly speaking, we should write $P \in \setPredCons_{\alpha(X)}$, where $\alpha(X)$ denotes the arity of $X$, but this should be clear from the context and the definition of substitution.

\begin{definition}[Support] \label{def:supp}
    Let $\basis{B}$ be a basis and $\base{B} \in \basis{B}$. Support is the smallest relation $\supp$ defined by the clauses of Figure~\ref{fig:support} in which all formulae are closed, $\Delta$ is a non-empty set of closed formulae, and $\Gamma$ is a finite (possibly empty) set of formulae. 
\end{definition}

\begin{figure}[t]
\hrule 
 \[
        \begin{array}{l@{\quad}c@{\quad}l@{\quad}r}
            \supp_{\base{B}} P  & \mbox{iff} &   \proves_{\base{B}} P & \mbox{(At)}  \\[1mm]
            %\supp_{\base{B}} \bot & \mbox{iff} &    \supp_{\base{B}} P \text{ for any } P \in \setPredCons_0 & (\bot) \\[1mm]
            \supp_{\base{B}} \phi \to \psi & \mbox{iff} & \phi \supp_{\base{B}} \psi & (\to) \\[1mm]
 \supp_{\base{B}} \forall x \phi & \mbox{iff} & \mbox{$\supp_{\base{B}} \phi[x \mapsto a]$ for any $a \in \setIndCons$}  & (\forall) \\[1mm]
 \supp_{\base{B}} \foralltwo X\! \phi & \mbox{iff} & \mbox{$\supp_{\base{B}} \phi[X \mapsto P]$ for any appropriate $P \in \setPredCons$}  & (\foralltwo) \\[1mm]
           \hspace{-4mm} \Delta \supp_{\base{B}} \phi & \mbox{iff} & \mbox{$\forall \base{C} \basesup{\mathfrak{B}} \baseB$, if $\supp_{\base{C}} \psi$ for any $\psi \in \Delta$, then $\supp_{\base{C}} \phi$ } &  \mbox{(Inf)} \\[1mm]
              \hspace{-1em} \Gamma \supp \phi & \mbox{iff} & \mbox{$\Gamma \supp_{\base{B}} \phi$ for any $\base{B} \in \basis{B}$}
            \end{array}
            \]
    \hrule
    \caption{Base-extension Semantics for Second-order Logic} 
    \label{fig:support}
\end{figure}

Of course, using the abbreviations discussed above, we also have the following:
 \[
        \begin{array}{l@{\quad}c@{\quad}l@{\hspace{20mm}}r}
            \supp_{\base{B}} \bot  & \mbox{iff} &   \proves_{\base{B}} P \text{ for any $P \in \setPredCons_0$} & \mbox{($\bot$)}  \\[1mm]
            %\supp_{\base{B}} \bot & \mbox{iff} &    \supp_{\base{B}} P \text{ for any } P \in \setPredCons_0 & (\bot) \\[1mm]
            \supp_{\base{B}} \phi \land \psi & \mbox{iff} & \mbox{for any $\base{C} \basesup{\basis{B}} \base{B}$ and $P \in \setPredCons_0$,} & \\[1mm]& & \mbox{if $\phi, \psi \supp_{\base{C}} P$, then $\supp_{\base{C}} P$} & (\land) \\[1mm]
   \supp_{\base{B}} \phi \land \psi & \mbox{iff} & \mbox{for any $\base{C} \basesup{\basis{B}} \base{B}$ and $P \in \setPredCons_0$,} & \\[1mm]& & \mbox{if $\phi \supp_{\base{C}} P$ and $\supp_{\base{C}} P$, then $\supp_{\base{C}} P$} & (\lor) \\[1mm]
      \supp_{\base{B}} \exists x \phi & \mbox{iff} & \mbox{for any $\base{C} \basesup{\basis{B}} \base{B}$ and $t \in \setIndCons$,} & \\[1mm]& & \mbox{if $\phi[x \mapsto t] \supp_{\base{C}} P$, then $\supp_{\base{C}} P$} & (\exists) \\[1mm]
      \supp_{\base{B}} \existstwo X \phi & \mbox{iff} & \mbox{for any $\base{C} \basesup{\basis{B}} \base{B}$ and $Q \in \setPredCons$,} & \\[1mm]& & \mbox{if $\phi[X \mapsto Q] \supp_{\base{C}} P$, then $\supp_{\base{C}} P$} & (\existstwo) \\[1mm]
            \end{array}
            \]
These clauses directly recover the B-eS of propositional and first-order intuitionistic and classical logic --- see Sandqvist~\cite{Sandqvist2005inferentialist,Sandqvist2009CL,Sandqvist2015} and Gheorghiu~\cite{Gheorghiu2025fol}.

We may also extend support to capture open formulae by treating them as quantified. If $x \in \FIV{\phi}$, then
\[
\supp \phi \qquad \mbox{iff} \qquad \mbox{$\supp_{\base{B}} \phi[x \mapsto a]$ for any $a \in \setIndCons$}. 
\]
If $X \in \FPV{\phi}$, then
\[
\supp \phi \qquad \mbox{iff} \qquad \mbox{$\supp_{\base{B}} \phi[X \mapsto P]$ for any appropriate $P \in \setPredCons$}. 
\]

This completes the definition of the semantic clauses. Intuitively, logical consequences should not depend on any particular inferential commitments an agent may hold; they should follow solely from the logical form of the formulae involved. We therefore quantify over all bases and define $\Gamma \supp \phi$ to mean $\Gamma \supp_{\base{B}} \phi$ for arbitrary $\base{B}$. Intuitively, this is equivalent to requiring that no specific commitments are assumed—that is, $\Gamma \supp_{\emptyset} \phi$.

\begin{lemma} \label{lem:empty}
    If $\emptyset \in\basis{B}$,
    \[
    \Gamma \supp \phi \qquad \mbox{iff} \qquad \Gamma \supp_\emptyset \phi
    \]
\end{lemma}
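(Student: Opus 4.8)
The plan is to reduce the statement to two elementary observations about base-extension together with one structural lemma (persistence of support under base-extension). Throughout, recall that by definition $\Gamma \supp \phi$ abbreviates ``$\Gamma \supp_{\base{B}} \phi$ for every $\base{B} \in \basis{B}$''. The forward direction is then immediate: since $\emptyset \in \basis{B}$ by hypothesis, instantiating this universal quantifier at $\base{B} := \emptyset$ yields $\Gamma \supp_{\emptyset} \phi$ directly from $\Gamma \supp \phi$.

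For the converse, the key observation is that, under the hypothesis $\emptyset \in \basis{B}$, the relation $\base{C} \basesup{\basis{B}} \emptyset$ holds precisely when $\base{C} \in \basis{B}$ (since $\emptyset \subseteq \base{C}$ always, and both bases must lie in $\basis{B}$); moreover base-extension is transitive, so $\base{C} \basesup{\basis{B}} \base{B}$ entails $\base{C} \in \basis{B}$ for every $\base{B} \in \basis{B}$. I would then split on whether $\Gamma$ is empty. When $\Gamma \neq \emptyset$, unfolding $\Gamma \supp_{\emptyset} \phi$ via the clause (Inf) gives: for every $\base{C} \in \basis{B}$, if $\supp_{\base{C}} \psi$ for all $\psi \in \Gamma$ then $\supp_{\base{C}} \phi$. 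Fixing an arbitrary $\base{B} \in \basis{B}$ and unfolding $\Gamma \supp_{\base{B}} \phi$ via (Inf) demands the same implication only for those $\base{C} \basesup{\basis{B}} \base{B}$, a subcollection of all of $\basis{B}$, so the hypothesis applies verbatim; as $\base{B}$ was arbitrary, $\Gamma \supp \phi$. Notably, this case needs no persistence lemma at all.

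The remaining case $\Gamma = \emptyset$ is where the structural lemma is needed: \emph{persistence}, stating that if $\base{Y} \basesup{\basis{B}} \base{X}$ and $\supp_{\base{X}} \phi$, then $\supp_{\base{Y}} \phi$. Granting this, from $\supp_{\emptyset} \phi$ and $\base{B} \basesup{\basis{B}} \emptyset$ (valid for every $\base{B} \in \basis{B}$ by the observation above) we obtain $\supp_{\base{B}} \phi$ for all $\base{B} \in \basis{B}$, i.e. $\supp \phi$. I would prove persistence by induction on the number of connectives and quantifiers in $\phi$, which is a well-founded measure since the clause $(\foralltwo)$ passes from $\foralltwo X \phi$ to the instances $\phi[X \mapsto P]$, each containing strictly fewer quantifiers (substitution of a predicate-variable by a predicate-constant leaves the connective count unchanged). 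The atomic base case follows from monotonicity of atomic derivability, namely that enlarging an atomic system preserves derivations, by a routine induction on Definition~\ref{def:der-base}; the clauses $(\to)$ and (Inf) are discharged directly by transitivity of $\basesup{\basis{B}}$ (any $\base{C} \basesup{\basis{B}} \base{Y}$ is also $\basesup{\basis{B}} \base{X}$), requiring no induction hypothesis; and the clauses $(\forall)$, $(\foralltwo)$ follow by applying the induction hypothesis to each instance. I expect persistence to be the only genuine work here: once it is in hand, both directions of the biconditional are mere bookkeeping with the definition of $\supp$ and the reduction of $\basesup{\basis{B}} \emptyset$ to membership in $\basis{B}$.
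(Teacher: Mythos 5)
Your proposal is correct and follows essentially the same route as the paper, whose proof simply defers to Sandqvist's argument: the forward direction by instantiating at $\emptyset \in \basis{B}$, and the converse via a persistence (monotonicity) lemma---your structural lemma is exactly the analogue of Sandqvist's Lemma~3.2, proved by the same induction with $(\to)$ and (Inf) handled by transitivity of $\basesup{\basis{B}}$ and the atomic case by monotonicity of $\proves_{\base{B}}$. Your observation that the non-empty-$\Gamma$ case needs no persistence at all, only the identification of $\base{C} \basesup{\basis{B}} \emptyset$ with $\base{C} \in \basis{B}$, is a tidy refinement but not a different method.
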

\begin{proof}
   This follows as in Sandqvist~\cite{Sandqvist2015} (discussion below Lemma 3.2). 
\end{proof}

\section{Proof Theory} ~\label{sec:pt}

Having now provided a semantic account of second-order logic (S-oL), we turn to its axiomatization. We will show that the choice of base system affects the resulting axiomatization. In particular, we focus on two canonical options:
\begin{itemize}
\item[--]$\basis{C} := \{\base{A} \mid \text{$\base{A}$ is zero- or first-level} \}$  
\item[--]$\basis{I} := \{\base{A} \mid \text{$\base{A}$ is zero-, first-, or second-level} \}$
\end{itemize}
The basis $\basis{C}$ yields a classical S-oL and  the basis $\basis{I}$ yields an intuitionistic S-oL (in the sense that it does not accept the law of double-negation elimination). Although this can be shown mathematically, a clear intuitive explanation for why this distinction arises remains elusive.

% \david{For some reason that I can't quite make precise just now, the idea of being clear about theories seems important here. I think it probably applies to your F-oL paper too, but here it seems even more pressing. Perhaps related to S-oL's role as a metalanguage?}

\subsection{Hilbert Calculus}

We will use Hilbert calculi as the medium of proof. As they are doubtless familiar, we give a terse account introducing only the mathematically relavent content for this paper. 

We will be working with the following sets of axioms:
\begin{itemize}
    \item[--]$\calculus{HC}$ comprises all the axioms in Figure~\ref{fig:sol}, including $\dne$
    \item[--]$\calculus{HI}$ comprises all the axioms in Figure~\ref{fig:sol}, excluding $\dne$. 
\end{itemize}

\begin{figure}[t]
\hrule
\vspace{2mm}
\[
\begin{array}{ l @{\hspace{-1mm}}  r }
      \phi \to (\psi \to \phi) & \hspace{0.4\linewidth}  (\textsc{K}) \\
  (\phi \to (\psi \to \chi)) \to \big((\phi \to \psi) \to (\phi \to \chi)\big) & (\textsc{S}) \\
     \forall x \phi \to \phi[t \mapsto x]  & 
   (\ern \forall) \\[1mm]
     \foralltwo X\! \phi \to \phi[P \mapsto X]  & 
   (\ern \foralltwo) \\[1mm]
   (\phi \to \psi) \to \big((\phi \to \neg \psi) \to \neg \phi \big) & (\irn \neg)\\
  (\phi \to \bot) \to (\phi \to \psi) & (\efq) \\
   \hspace{-2ex} \dotfill & \dotfill \\[1mm]
    \neg \neg \phi \to \phi & (\dne)
\end{array}
\]
\vspace{2mm}
\hrule
\caption{Axiomatization of Second-order Logic, $\mathsf{HI}$ and $\mathsf{HC}$} \label{fig:sol}
\end{figure}

\begin{definition}[Consequence from an Axiomatization] \label{def:consequence}
   Let $\mathsf{H}$ be a Hilbert calculus. The $\mathsf{H}$-consequence relation $\proves_\mathsf{H}$ is defined inductively as follows: 
     \begin{itemize}[label=--]
       \item[--]\textsc{Axiom}. If $\phi \in \mathsf{H}$ then $\Gamma \proves_\mathsf{H} \phi$
       \item[--]\textsc{Hypothesis}. If $\phi \in \Gamma$, then $\Gamma \proves_\mathsf{H} \phi$
       \item[--]\textsc{Modus Ponens}. If $\Gamma \proves_\mathsf{H} \phi$ and $\Gamma \proves_\mathsf{H} \phi \to \psi$, then $\Gamma \proves_\mathsf{H} \psi$
       \item[--]\textsc{First-order Generalization}. If $\Gamma \proves_\mathsf{H} \psi \to \phi$ and $x \not \in \FIV{\psi, \Gamma}$, then $\Gamma \proves_\mathsf{H} \psi \to \forall x\phi$
       \item[--]\textsc{Second-order Generalization}. If $\Gamma \proves_\mathsf{H} \psi \to \phi$ and $X \not \in \FPV{\psi, \Gamma}$, then $\Gamma \proves_\mathsf{H} \psi \to \foralltwo X\!\phi$.
    \end{itemize}
\end{definition}

%\todo{some hand-holidng here}

Before proceeding, we state some elementary results about these proof systems. In the following, $\calculus{A} \in \{ \calculus{HI}, \calculus{HC}\}$:
\begin{proposition}\label{prop:deduction-theorem}
     If $\Gamma \proves_{\calculus{A}} \phi \to \psi$ iff $\phi, \Gamma \proves_{\calculus{A}} \psi$
\end{proposition}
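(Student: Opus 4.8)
The statement is the deduction theorem, and the plan is to prove the two biconditional directions separately: the right-to-left implication is immediate, while the left-to-right one goes by induction on the derivation. First I would record the routine \emph{monotonicity} lemma (if $\Gamma \proves_{\calculus{A}} \chi$ and $\Gamma \subseteq \Gamma'$, then $\Gamma' \proves_{\calculus{A}} \chi$), proved by a trivial induction on Definition~\ref{def:consequence}. For the direction $\Gamma \proves_{\calculus{A}} \phi \to \psi$ implies $\phi, \Gamma \proves_{\calculus{A}} \psi$: monotonicity gives $\phi, \Gamma \proves_{\calculus{A}} \phi \to \psi$, the \textsc{Hypothesis} clause gives $\phi, \Gamma \proves_{\calculus{A}} \phi$, and \textsc{Modus Ponens} yields $\phi, \Gamma \proves_{\calculus{A}} \psi$. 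This direction uses none of the quantifier rules and is uniform in the choice of $\calculus{A}$.

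The substantive direction is $\phi, \Gamma \proves_{\calculus{A}} \psi$ implies $\Gamma \proves_{\calculus{A}} \phi \to \psi$, by induction on the derivation, with one case per clause of Definition~\ref{def:consequence}. If $\psi$ is an axiom or lies in $\Gamma$, then $\Gamma \proves_{\calculus{A}} \psi$, and an instance of $(\textsc{K})$ with \textsc{Modus Ponens} gives $\Gamma \proves_{\calculus{A}} \phi \to \psi$; if $\psi = \phi$, it suffices to derive $\proves_{\calculus{A}} \phi \to \phi$, which follows from $(\textsc{K})$ and $(\textsc{S})$ in the usual way. For \textsc{Modus Ponens}, where $\phi, \Gamma \proves_{\calculus{A}} \chi$ and $\phi, \Gamma \proves_{\calculus{A}} \chi \to \psi$, the induction hypothesis gives $\Gamma \proves_{\calculus{A}} \phi \to \chi$ and $\Gamma \proves_{\calculus{A}} \phi \to (\chi \to \psi)$, and two applications of \textsc{Modus Ponens} against the relevant instance of $(\textsc{S})$ produce $\Gamma \proves_{\calculus{A}} \phi \to \psi$. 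Since $\dne$, $\efq$, and $\irn\neg$ are treated exactly like any other axiom, the argument is uniform over $\calculus{HI}$ and $\calculus{HC}$.

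The delicate cases, and the main obstacle, are the two generalization rules. Suppose the last step is \textsc{First-order Generalization}, so that $\psi = \sigma \to \forall x \chi$ is obtained from $\phi, \Gamma \proves_{\calculus{A}} \sigma \to \chi$ with $x \notin \FIV{\sigma, \phi, \Gamma}$; because $\phi$ sits in the context of the subderivation, this side condition automatically gives $x \notin \FIV{\phi}$, which is exactly what licenses moving $\phi$ across the quantifier. By the induction hypothesis $\Gamma \proves_{\calculus{A}} \phi \to (\sigma \to \chi)$. The plan is then to (i) use the antecedent-commutation theorem (derivable from $(\textsc{K})$ and $(\textsc{S})$) to reshape this derivation, (ii) apply \textsc{First-order Generalization}, legitimate since $x$ is free in none of $\phi, \sigma, \Gamma$, and (iii) push the quantifier back inward using the provable quantifier-shift schema $\proves_{\calculus{A}} \forall x(\rho \to \vartheta) \to (\rho \to \forall x \vartheta)$ for $x \notin \FIV{\rho}$, whose derivation uses $(\ern\forall)$ together with the generalization rule in its implicational form. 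Transitivity of implication (again from $(\textsc{K})$ and $(\textsc{S})$) then assembles $\Gamma \proves_{\calculus{A}} \phi \to (\sigma \to \forall x \chi)$, as required.

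The \textsc{Second-order Generalization} case is formally identical, with $\FPV{\cdot}$, the substitution $[X \mapsto P]$, and the axiom $(\ern\foralltwo)$ replacing their first-order counterparts. I expect the bookkeeping of the free-variable side conditions---ensuring the eigenvariable is free in none of $\phi$, $\sigma$, or $\Gamma$ at each re-application of generalization, and that the quantifier-shift schema's proviso is met---to be the only genuinely error-prone part; everything else reduces to standard combinatory manipulation of implications.
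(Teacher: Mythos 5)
The paper itself gives no proof of this proposition --- it is stated as one of two ``elementary results'' and left entirely to the reader --- so your proposal has to stand on its own. Its propositional skeleton does: the right-to-left direction, the axiom/hypothesis cases, and the \textsc{Modus Ponens} case via $(\textsc{S})$ are all standard and correct (one small caveat: even your ``routine'' monotonicity lemma is not a trivial induction here, because the generalization rules carry the side condition $x \notin \FIV{\Gamma}$, so weakening by a formula containing $x$ free requires renaming eigenvariables inside the derivation).

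The genuine gap is exactly where you locate the ``main obstacle'': your step (iii) asserts that the quantifier-shift schema $\proves_{\calculus{A}} \forall x(\rho \to \vartheta) \to (\rho \to \forall x \vartheta)$ (for $x \notin \FIV{\rho}$) is derivable ``using $(\ern \forall)$ together with the generalization rule in its implicational form'', but no such derivation exists along those lines. The natural attempt --- from $(\ern \forall)$ obtain $\forall x(\rho \to \vartheta) \to (\rho \to \vartheta)$, commute to $\rho \to (\forall x(\rho \to \vartheta) \to \vartheta)$, then generalize on the antecedent $\rho$ --- yields $\rho \to \forall x\big(\forall x(\rho \to \vartheta) \to \vartheta\big)$, and to finish you need the very same shift schema with the new antecedent $\forall x(\rho \to \vartheta)$; iterating produces the sequence $A_0 = \rho$, $A_{n+1} = \forall x(A_n \to \vartheta)$ and never terminates. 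The obstruction is structural: \textsc{First-order Generalization} can only prefix $\forall x$ to the entire consequent of a single top-level implication, so generalizing under \emph{two} antecedents (which is what both your \textsc{Generalization} case and the shift schema demand) requires merging antecedents. Standard treatments do this either with primitive conjunction (Kleene) or by adopting the shift schema as an axiom (Church, Mendelson); the systems $\calculus{HI}$, $\calculus{HC}$ of Figure~\ref{fig:sol} have neither, and the defined $\phi \land \psi := \foralltwo X\!^0((\phi \to (\psi \to X^0)) \to X^0)$ is no substitute, since $(\ern \foralltwo)$ instantiates predicate variables only at predicate \emph{constants}, never at formulas, so even $(\phi \land \psi) \to \phi$ has no elementary derivation. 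For $\calculus{HC}$ your argument can be repaired: merge antecedents via the classically definable conjunction $\neg(\phi \to \neg \sigma)$, whose import/export laws follow from $(\textsc{K})$, $(\textsc{S})$, $(\irn \neg)$, $(\efq)$ and $\dne$, then generalize, then unmerge. For $\calculus{HI}$ no such repair is available, and I know of no elementary route to the shift schema in that system; its validity in the semantics (immediate, since the $(\forall)$ clause involves no base extension) only yields derivability via the paper's completeness theorem, which is far from the self-contained syntactic argument you intend. So the proposal, as written, proves the classical half at best and leaves the intuitionistic half open.
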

\begin{proposition}\label{prop:eigenvariable}
    If $\Gamma \proves_{\calculus{A}} \phi$ and $e$ is an individual-constant (resp. $E$ is a predicate-constant) that possibly occurs in $\phi$ but does not occur in $\Gamma$, then $\Gamma \proves_{\calculus{A}} \phi[e \mapsto x]$ (resp. $\Gamma \proves_{\calculus{A}} \phi[E \mapsto X]$). 
\end{proposition}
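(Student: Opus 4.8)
The plan is to proceed by induction on the derivation witnessing $\Gamma \proves_{\calculus{A}} \phi$, following the inductive clauses of Definition~\ref{def:consequence}. Since the individual-constant and predicate-constant statements are verbatim analogous (replacing $\FIV{\cdot}$ by $\FPV{\cdot}$, First-order Generalization by Second-order Generalization, and $\ern\forall$ by $\ern\foralltwo$), I would prove only the former and remark that the latter follows \emph{mutatis mutandis}. Throughout, I would take the target variable $x$ to be \emph{globally fresh} --- not occurring anywhere in the given derivation --- so that every substitution $[e \mapsto x]$ performed below is capture-avoiding and so that $x$ differs from every eigenvariable used at a Generalization step. The claim is then established for this $x$; fixing $x$ fresh is harmless, since the statement only asserts derivability of $\phi[e \mapsto x]$.

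For the base cases: in the \textsc{Axiom} case, $\phi$ is an instance of one of the schemes of Figure~\ref{fig:sol}, and I would check that $\phi[e \mapsto x]$ is again an instance of the \emph{same} scheme. For the propositional schemes ($\textsc{K}$, $\textsc{S}$, $\irn\neg$, $\efq$, $\dne$) this is immediate because $[e \mapsto x]$ commutes with $\to$, $\neg$, and $\bot$; for the instantiation schemes $\ern\forall$ and $\ern\foralltwo$ one additionally invokes the standard commutation of independent substitutions (valid because $x$ is fresh), so that applying $[e \mapsto x]$ to an instance yields another instance. In the \textsc{Hypothesis} case, $\phi \in \Gamma$; since by assumption $e$ does not occur in $\Gamma$, it does not occur in $\phi$, whence $\phi[e \mapsto x] = \phi$ and the very same Hypothesis step applies. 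This is precisely the point at which the side condition $e \notin \Gamma$ is used.

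For the inductive cases: in the \textsc{Modus Ponens} case, the induction hypothesis applied to the two premises yields $\Gamma \proves_{\calculus{A}} \psi[e \mapsto x]$ and $\Gamma \proves_{\calculus{A}} \psi[e \mapsto x] \to \chi[e \mapsto x]$, using that $(\psi \to \chi)[e \mapsto x] = \psi[e \mapsto x] \to \chi[e \mapsto x]$; one further Modus Ponens gives the conclusion. In the \textsc{First-order Generalization} case the premise is $\Gamma \proves_{\calculus{A}} \psi \to \chi$ with $y \notin \FIV{\psi, \Gamma}$ and the conclusion is $\Gamma \proves_{\calculus{A}} \psi \to \forall y\, \chi$. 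The induction hypothesis gives $\Gamma \proves_{\calculus{A}} \psi[e \mapsto x] \to \chi[e \mapsto x]$, and I would reapply the rule with the same eigenvariable $y$. Its side condition survives: $\Gamma$ is unchanged because $e \notin \Gamma$, and $y \notin \FIV{\psi[e \mapsto x]}$ because $y \notin \FIV{\psi}$ and the only variable introduced by $[e \mapsto x]$ is the fresh $x \neq y$. Freshness of $x$ also gives $(\psi \to \forall y\, \chi)[e \mapsto x] = \psi[e \mapsto x] \to \forall y\, \chi[e \mapsto x]$, so the reapplied rule delivers exactly $\Gamma \proves_{\calculus{A}} \phi[e \mapsto x]$.

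I do not expect a genuine obstacle here, as the content is essentially bookkeeping. The only two places requiring care are (i) confirming that each axiom scheme is closed under replacing a constant by a fresh variable, where the binding schemes $\ern\forall$ and $\ern\foralltwo$ rely on the substitution-commutation lemma, and (ii) preserving the eigenvariable side conditions of the two Generalization rules, which is exactly where the global freshness of $x$ (resp. $X$) and the hypothesis $e \notin \Gamma$ (resp. $E \notin \Gamma$) are deployed. If anything is delicate, it is only pinning down the capture-avoidance conventions cleanly enough that these commutations are literal equalities rather than equalities up to renaming of bound variables.
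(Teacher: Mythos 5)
The paper states this proposition without proof (it is offered as an ``elementary result''), so there is no official argument to compare against; your induction on derivations --- the classical ``theorem on constants'' --- is certainly the intended style of argument, and your first-order half is essentially correct. In particular, you are right that freshness of $x$ is not merely convenient but necessary: with capture permitted the statement is false, since $\proves_{\calculus{A}} P(e) \to \forall x\, P(e)$ (by \textsc{First-order Generalization}) while $P(x) \to \forall x\, P(x)$ is not derivable. You are also right that the Hypothesis case is where $e \notin \Gamma$ enters, and the $\ern\forall$ axiom case works precisely because $\setIndVar$ is included in the set of terms, so $t[e \mapsto x]$ is again a legitimate instantiating term. (Two small points: within the individual-constant half you must also treat the \textsc{Second-order Generalization} case, which is trivial; and the eigenvariable case where the derivation generalizes on $x$ itself is excluded only because you chose $x$ fresh for the whole derivation, so keep that convention explicit.)

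The genuine gap is in the half you dismiss as \emph{verbatim} analogous, at the Axiom case for $\ern{\foralltwo}$. In this paper the scheme is $\foralltwo X \phi \to \phi[X \mapsto P]$ with $P \in \setPredCons$ only (see Figure~\ref{fig:sol} and Lemma~\ref{lem:axioms}), and the substitution $[X \mapsto P]$ is itself defined only for predicate \emph{constants}; there is no notion of instantiating a second-order quantifier by a predicate \emph{variable}, in contrast to the first-order side where variables are terms. Consequently, applying $[E \mapsto X]$ to an instance whose instantiating constant is $E$ itself --- for example the axiom $\foralltwo Y (Q \to Y) \to (Q \to E)$ --- yields $\foralltwo Y (Q \to Y) \to (Q \to X)$, which is \emph{not} an instance of any scheme, so ``the image is an instance of the same scheme'' fails exactly here. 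Nor is this mere bookkeeping that a derivability lemma would repair: instantiation by a free predicate variable does not appear to be admissible in $\calculus{HI}$ or $\calculus{HC}$ as defined. Pushing the example through the paper's embedding of $\calculus{HI}$ into $\calculus{NI}$ (Section~\ref{sec:henkin}), a normal $\calculus{NI}$-derivation of the atom $X$ from $\foralltwo Y(Q \to Y)$ and $Q$ could only end in eliminations, which produce formulas of the forms $Q \to P$ and $P$ for constants $P$, or in $\efq$, whose premise $\bot = \foralltwo Z\, Z$ would require deriving a fresh variable $Z$ --- the same problem regressed. So for the predicate-constant half your plan breaks down at a point that reflects a real asymmetry in the paper's system: the argument goes through only if $\ern{\foralltwo}$ (and the definition of substitution) is read as permitting instantiation by predicate variables as well as constants --- the standard formulation of second-order instantiation --- and that reading should be stated as a hypothesis rather than assumed silently.
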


Our soundness and completeness results are as follows:
\begin{theorem*}
    Let $\basis{B} = \basis{C}$ (resp. $\basis{B} = \basis{I}$):
    \[
    \Gamma \supp \phi \qquad \mbox{iff} \qquad \mbox{$\Gamma \proves_{\mathsf{HC}} \phi$ (resp. $\Gamma \proves_{\mathsf{HI}} \phi$)}
    \]
\end{theorem*}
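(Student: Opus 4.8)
The plan is to prove the two directions—soundness ($\Gamma \proves_{\calculus{A}} \phi$ implies $\Gamma \supp \phi$) and completeness ($\Gamma \supp \phi$ implies $\Gamma \proves_{\calculus{A}} \phi$)—by a uniform argument in which the only difference between the classical case ($\basis{B}=\basis{C}$, $\calculus{A}=\calculus{HC}$) and the intuitionistic case ($\basis{B}=\basis{I}$, $\calculus{A}=\calculus{HI}$) is (i) whether $\dne$ must be validated and (ii) which rule-levels are permitted in the bases witnessing completeness. First I would record two auxiliary facts used throughout: a substitution lemma stating that $\supp_{\base{B}}$ is stable under $[x\mapsto a]$ and $[X\mapsto P]$ in the expected way, and a monotonicity lemma stating that $\supp_{\base{B}}\phi$ together with $\base{C}\basesup{\basis{B}}\base{B}$ implies $\supp_{\base{C}}\phi$. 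I would also invoke Lemma~\ref{lem:empty} to pass freely between $\Gamma\supp\phi$ and $\Gamma\supp_\emptyset\phi$, since $\emptyset$ lies in both $\basis{C}$ and $\basis{I}$.

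For soundness I would induct on the derivation of $\Gamma\proves_{\calculus{A}}\phi$ following Definition~\ref{def:consequence}. The axioms $(\textsc{K})$, $(\textsc{S})$, $(\irn\neg)$ and $(\efq)$ are verified directly from the $(\to)$ and $(\mathrm{Inf})$ clauses, exactly as in the propositional base-extension semantics of Sandqvist~\cite{Sandqvist2015}. The elimination axioms $(\ern\forall)$ and $(\ern\foralltwo)$ follow by unfolding the $(\to)$ clause and then the $(\forall)$, respectively $(\foralltwo)$, clause and instantiating the universally quantified constant at $t$, respectively $P$. Modus ponens is the $(\mathrm{Inf})$ clause combined with $(\to)$. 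The two generalization rules are handled by the eigenvariable technique: using the open-formula convention and the substitution lemma, the side condition $x\notin\FIV{\psi,\Gamma}$ (resp.\ $X\notin\FPV{\psi,\Gamma}$) ensures that substituting the bound variable leaves $\psi$ and $\Gamma$ untouched, so the induction hypothesis applied at every instance yields support for $\psi\to\forall x\phi$ (resp.\ $\psi\to\foralltwo X\phi$). The only case that separates the two systems is $(\dne)$, which is validated only when $\basis{B}=\basis{C}$; the justification is precisely Sandqvist's observation~\cite{Sandqvist2005inferentialist,Sandqvist2009CL} that excluding second-level (discharging) rules collapses the support relation to classical behaviour.

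For completeness I would adapt Sandqvist's base-construction method~\cite{Sandqvist2015}, as extended to first-order quantifiers by Gheorghiu~\cite{Gheorghiu2025fol}. Given finite $\Gamma$ and $\phi$, let $S$ be the finite set of subformulae of $\Gamma\cup\{\phi\}$, and fix an injection $\flatop{(\cdot)}$ sending each non-atomic $\chi\in S$ to a fresh propositional atom and fixing atoms. I would then define a base $\base{N}$ whose atomic rules mirror, on the surrogates $\flatop{\chi}$, the introduction and elimination behaviour of each connective and quantifier occurring in $S$, using fresh eigenconstants and eigenpredicates to witness the quantifier instances. Crucially, for the classical case $\base{N}$ is built from zero- and first-level rules only, so $\base{N}\in\basis{C}$, while for the intuitionistic case second-level rules are admitted and $\base{N}\in\basis{I}$. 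The two technical lemmas are: (a) a flattening correspondence $\supp_{\base{B}}\chi \iff \supp_{\base{B}}\flatop{\chi}$ for every $\chi\in S$ and every $\base{B}\basesup{\basis{B}}\base{N}$, proved by induction on $\chi$; and (b) a simulation lemma showing that $\proves_{\base{N}}$ on surrogates coincides with $\proves_{\calculus{A}}$ on the corresponding formulae. Assuming $\Gamma\supp\phi$, I instantiate at $\base{N}$, apply (a) to convert to surrogate atoms, apply the atomic clause $(\mathrm{At})$, and finally apply (b) to recover $\Gamma\proves_{\calculus{A}}\phi$.

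The main obstacle is the quantifier machinery in completeness, and especially the second-order quantifier $\foralltwo X$. Because the $(\foralltwo)$ clause ranges over all predicate-constants $P\in\setPredCons$, the base $\base{N}$ cannot literally encode one rule per instance; I expect instead to follow Gheorghiu's eigenvariable strategy, introducing fresh predicate-constants to witness universal surrogates and relying on the second-order generalization rule of $\calculus{A}$ in the simulation lemma (b). Verifying the flattening correspondence (a) for the $(\forall)$ and $(\foralltwo)$ clauses—where one must show that support for the surrogate of a quantified formula, established only at the fresh witnesses, forces support at every instance—will be the delicate point, as will be confirming that the restricted base $\base{N}\in\basis{C}$ still suffices in the classical case. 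This last verification is exactly what ties completeness back to the validity of $\dne$ and keeps the argument modular across the two bases.
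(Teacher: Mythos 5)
Your overall architecture (soundness by induction on Definition~\ref{def:consequence}; completeness via a Sandqvist-style simulation base with a flattening injection) matches the paper, and your soundness sketch is essentially the paper's Lemmas~\ref{lem:axioms}--\ref{lem:rules}. But there is a genuine gap in the completeness half, precisely at the point you flag but do not resolve: the classical case. You propose a base $\base{N}$ whose atomic rules ``mirror the introduction and elimination behaviour of each connective'', built from zero- and first-level rules only so that $\base{N}\in\basis{C}$. That cannot be carried out as stated: mirroring the \emph{introduction} behaviour of $\to$ on surrogates requires a rule of the form $\{\{\flatop{\phi}\}\Rightarrow\flatop{\psi}\}\Rightarrow\flatop{(\phi\to\psi)}$, i.e.\ a discharging, second-level rule --- exactly what membership in $\basis{C}$ forbids. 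Your plan inherits Sandqvist's NJ-simulation, which is fine over $\basis{I}$ but breaks over $\basis{C}$, and saying that the restricted base ``still suffices'' is the whole difficulty, not a verification detail. The paper's key move, which you are missing, is to simulate the \emph{Hilbert calculus} rather than natural deduction: the simulation base (Figure~\ref{fig:simulation}) consists of the flattened axioms $(\textsc{K})$, $(\textsc{S})$, $(\ern\forall)$, $(\ern\foralltwo)$, $(\irn\neg)$, $(\efq)$ as zero-level rules, together with modus ponens and the two generalization rules as first-level rules; the deduction-theorem behaviour of the implication surrogate ($\flatop{\phi}\proves_{\base{N}'}\flatop{\psi}$ iff $\proves_{\base{N}'}\flatop{(\phi\to\psi)}$, Lemma~\ref{lem:pre-dagger}) is then recovered meta-theoretically by the Herbrand-style argument from $(\textsc{K})$ and $(\textsc{S})$, with no discharge anywhere. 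Consequently \emph{both} simulation bases $\base{K}$ and $\base{J}$ are first-level and lie in $\basis{C}\subseteq\basis{I}$; they differ only in whether the zero-level rule $\flatop{(\neg\neg\phi\to\phi)}$ is present. Your picture --- second-level rules in the intuitionistic simulation base, none in the classical one --- is inverted relative to what is needed: second-level rules matter only on the soundness side, where they are what defeats $\dne$ over $\basis{I}$, not in the completeness construction.

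Two smaller corrections. First, flattening only the finite set of subformulae of $\Gamma\cup\{\phi\}$ is not enough: the clauses $(\forall)$ and $(\foralltwo)$ force you to handle all instances $\psi[x\mapsto a]$ and $\psi[X\mapsto P]$, which are not subformulae in the literal sense; the paper accordingly flattens the whole language $\Xi^\ast$ generated by the predicates occurring in $\Gamma,\phi$ over \emph{all} terms and predicates, including the eigen-constants and eigen-predicates. Second, your worry that the base ``cannot literally encode one rule per instance'' of the second-order quantifier is misplaced: bases may be infinite, and the paper's base does contain one zero-level rule $\flatop{(\foralltwo X\,\phi\to\phi[X\mapsto P])}$ for every $P\in\setPredCons$; the eigen-predicates are needed only for the introduction direction, via the first-level rule $\flatop{(\phi\to\psi)}\Rightarrow\flatop{(\phi\to\foralltwo X\,\psi[E\mapsto X])}$, which is then transferred to $\calculus{HC}$ (resp.\ $\calculus{HI}$) by Proposition~\ref{prop:eigenvariable} and \textsc{second-order generalization} in Lemma~\ref{lem:natural}.
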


The remainder of this section is concerned with proving this theorem. 

\subsection{Soundness}

We show that, according to the choice of basis, support satisfies the inductive definition of provability in $\mathsf{HC}$ and $\mathsf{HI}$. Firstly, Lemma~\ref{lem:axioms} claims that the common axioms in $\mathsf{HC}$ and $\mathsf{HI}$ hold for both choices of basis $\basis{C}$ and $\basis{I}$. Secondly, Lemma~\ref{lem:dne} claims that $\dne$ holds for $\basis{C}$. Finally, Lemma~\ref{lem:rules} shows that the inductive definition of consequence (Definition~\ref{def:consequence}) holds for both $\basis{C}$ and $\basis{I}$ with the axioms $\mathsf{HC}$ and $\mathsf{HI}$, respectively. 

\begin{lemma} \label{lem:axioms}
    If $\basis{B} \in \{\basis{C},\basis{I}\}$, then $\supp \phi$ for each $\phi \in \mathsf{HI}$. That is, for any $\phi, \psi, \chi \in \setFormula$:
    \begin{itemize}
        \item[--]$(\textsc{K})$ \hspace{0.2mm} $\supp \phi \to (\psi \to \phi)$ 
  \item[--]$(\textsc{S})$ \hspace{0.9mm} $\supp (\phi \to (\psi \to Z)) \to \big((\phi \to \psi) \to (\phi \to \chi)\big)$ 
     \item[--]$(\ern \forall)$  $\supp \forall x \phi \to \phi[x \mapsto a]$ for any $a \in \setIndCons$ 
     \item[--] $(\ern \foralltwo)$ $ \supp \foralltwo X\! \phi \to \phi [X \mapsto P]$ for any $P \in \setPredCons$
   \item[--]$(\irn \neg)$ \hspace{0.5mm} $\supp (\phi \to \psi) \to \big((\phi \to \neg \psi) \to \neg \phi \big)$ 
  \item[--]$(\efq)$  $\supp(\phi \to \bot) \to (\phi \to \psi)$ 
    \end{itemize}
\end{lemma}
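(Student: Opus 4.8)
The plan is to reduce each axiom $\phi$ to the single obligation $\supp_{\base{B}}\phi$ for an arbitrary $\base{B}\in\basis{B}$ (this is the empty-antecedent instance of the final clause of Figure~\ref{fig:support}), and then to unfold the clauses $(\to)$, $(\forall)$, $(\foralltwo)$ and $(\mathrm{Inf})$ until the goal becomes either a statement about derivability in an atomic system or a hypothesis already collected along the way. Throughout I rely on \emph{persistence} of support: if $\supp_{\base{B}}\phi$ and $\base{C}\basesup{\basis{B}}\base{B}$, then $\supp_{\base{C}}\phi$. This is a routine induction on $\phi$ whose atomic case is the monotonicity of $\proves_{\base{S}}$ under $\supseteq$ built into Definition~\ref{def:der-base}, while every other clause already quantifies over extensions and so inherits persistence using transitivity of $\basesup{\basis{B}}$. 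Since no axiom carries a free variable, the open-formula clauses are never invoked; and since the argument never inspects the \emph{shape} of atomic rules, it is uniform in the choice $\basis{B}\in\{\basis{C},\basis{I}\}$, which is exactly what the statement asks for.

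For the implicational and quantifier axioms the verification is a direct unfolding. For $(\textsc{K})$ I reduce $\supp_{\base{B}}\phi\to(\psi\to\phi)$ to: for all $\base{C}\basesup{\basis{B}}\base{B}$ with $\supp_{\base{C}}\phi$ and all $\base{D}\basesup{\basis{B}}\base{C}$ with $\supp_{\base{D}}\psi$, one has $\supp_{\base{D}}\phi$, which is persistence applied to $\supp_{\base{C}}\phi$. The axiom $(\textsc{S})$ is the analogous combinator manipulation: after unfolding to a base $\base{E}$ one has $\supp_{\base{E}}\phi$, $\supp_{\base{E}}(\phi\to(\psi\to\chi))$ and $\supp_{\base{E}}(\phi\to\psi)$ (the last two by persistence), and composes them to obtain $\supp_{\base{E}}\chi$. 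For $(\ern\forall)$ I unfold to: if $\supp_{\base{C}}\forall x\phi$ then $\supp_{\base{C}}\phi[x\mapsto a]$, and read off the instance $b:=a$ supplied by clause $(\forall)$; $(\ern\foralltwo)$ is identical via $(\foralltwo)$, instantiating the quantified predicate-constant at $P$. For $(\irn\neg)$, writing $\neg\chi:=\chi\to\bot$, repeated unfolding lands, in a base $\base{E}$ extending the chain, on the goal $\supp_{\base{E}}\bot$ under assumptions $\supp_{\base{E}}\phi$, $\supp_{\base{E}}(\phi\to\psi)$, $\supp_{\base{E}}(\phi\to\neg\psi)$; the first two give $\supp_{\base{E}}\psi$, then $\supp_{\base{E}}(\phi\to\neg\psi)$ with $\supp_{\base{E}}\phi$ gives $\supp_{\base{E}}\neg\psi$, i.e. $\psi\supp_{\base{E}}\bot$, whence $\supp_{\base{E}}\bot$. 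Note that $(\irn\neg)$ \emph{produces} $\bot$ as its target and so never needs to reason \emph{from} $\bot$.

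The one genuinely load-bearing case is $(\efq)$. Unfolding $\supp_{\base{B}}(\phi\to\bot)\to(\phi\to\psi)$ in the same way reduces, in some base $\base{D}$, to the implication $\supp_{\base{D}}\bot \Rightarrow \supp_{\base{D}}\psi$ for an arbitrary formula $\psi$. I therefore plan to isolate an \emph{explosion lemma}: for every base $\base{B}$ and every $\psi\in\setFormula$, $\supp_{\base{B}}\bot$ implies $\supp_{\base{B}}\psi$, proved by induction on $\psi$. The $(\to)$, $(\forall)$ and $(\foralltwo)$ cases go through immediately from the induction hypothesis together with persistence, since substituting a constant or predicate-constant strictly decreases the inductive measure and so the hypothesis applies to each instance.

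I expect the atomic base case of this lemma to be the main obstacle, and the only place where the definition of $\bot$ does real work: from $\supp_{\base{B}}\bot$ one must obtain $\supp_{\base{B}}A = \,\proves_{\base{B}}A$ for \emph{every} closed atom $A$, not merely for the nullary predicate-constants that clause $(\bot)$ mentions most directly. Making this step watertight requires that support for $\bot=\foralltwo X^0(X^0)$ be genuinely explosive over the whole of $\setClosedAtom$ — the clauses for atomic support cannot manufacture support for a new atom out of persistence or quantifier-instantiation alone, so the burden falls squarely on the interpretation of $\bot$. Once the explosion lemma is secured, $(\efq)$ follows by the routine unfolding above, and with it the whole of Lemma~\ref{lem:axioms}, uniformly for $\basis{C}$ and $\basis{I}$.
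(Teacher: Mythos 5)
Your treatment of $(\textsc{K})$, $(\textsc{S})$, $(\ern\forall)$, $(\ern\foralltwo)$ and $(\irn\neg)$ is correct: persistence of support under base-extension holds by the induction you sketch, the unfolding arguments compose as you say, and your $(\ern\foralltwo)$ case is essentially the paper's own proof of that case (the paper proves only this case explicitly and delegates the rest to the first-order treatment of Gheorghiu). The genuine problem sits exactly where you placed it, but it is worse than an ``obstacle'': the atomic case of your explosion lemma, which you leave open, is \emph{false} under the paper's definitions, so the $(\efq)$ case cannot be closed as proposed. Since $\bot$ is an abbreviation for $\foralltwo X^0(X^0)$, clause $(\foralltwo)$ gives $\supp_{\base{B}}\bot$ iff $\proves_{\base{B}}P$ for every \emph{nullary} $P\in\setPredCons_0$; this says nothing about closed atoms of positive arity. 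Concretely, let $\base{D}:=\{\,\Rightarrow P \mid P\in\setPredCons_0\,\}$, a set of zero-level rules, hence a base lying in both $\basis{C}$ and $\basis{I}$. Every extension of $\base{D}$ derives every nullary $P$, so $\supp_{\base{E}}\bot$ for all $\base{E}\supseteq\base{D}$, whence $\supp_{\base{D}}(Q\to Q)\to\bot$. But no rule of $\base{D}$ has conclusion $R(a)$ for a unary $R\in\setPredCons_1$ and $a\in\setIndCons$, so $\not\proves_{\base{D}}R(a)$ and therefore $\not\supp_{\base{D}}(Q\to Q)\to R(a)$. Thus the instance $\big((Q\to Q)\to\bot\big)\to\big((Q\to Q)\to R(a)\big)$ of $(\efq)$ is not supported at $\base{D}$, and so is not valid.

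This is not something a cleverer argument on your part could repair; it is a mismatch in the paper itself, which your decomposition exposes. The paper declares the $(\efq)$ case ``identical'' to the first-order proof, but there $\bot$ is a primitive whose clause quantifies over \emph{all} atoms (compare Sandqvist's clause $(\bot)$ reproduced in Figure~1 of this paper: $\supp_{\base{B}}\bot$ iff $\supp_{\base{B}}P$ for any atom $P$), which makes the atomic case of explosion immediate. Here $\bot$ is instead defined by second-order quantification over $\setPredCons_0$ only --- a strictly weaker condition over which the first-order argument does not transfer. To prove the lemma as stated one must strengthen the interpretation of $\bot$, e.g.\ take $\bot$ as primitive with the clause ranging over all of $\setClosedAtom$, or otherwise guarantee that support for $\foralltwo X^0(X^0)$ forces derivability of every closed atom. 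So: you located the crux precisely, and everything outside $(\efq)$ is sound, but the proposal is incomplete at $(\efq)$ and, under the stated definitions, necessarily so.
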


%\david{I think some first-order stuff needs to go in. It's a big ask otherwise?}

\begin{proof}
    Except for the cases concerning $\foralltwo$, the proofs are identical to those in Gheorghiu~\cite{Gheorghiu2025fol}. Therefore it remains to show $\supp \foralltwo X\! \phi \to \phi [X \mapsto P]$ for any $P \in \setPredCons$.

    Let $\base{B} \in \basis{B}$ be arbitrary such that $\supp_{\base{B}} \foralltwo X\! \phi$. By ($\foralltwo$), $\supp_{\base{B}} \phi[X \mapsto P]$ for any $P \in \setPredCons$. Since $\base{B}$ was arbitrary, by (Inf), $\supp_{\emptyset}  \foralltwo X\! \phi \to \phi [X \mapsto P]$. The desired result follow from Lemma~\ref{lem:empty}. 
\end{proof}

\begin{lemma} \label{lem:dne}
    Let $\basis{B}=\basis{C}$. If $\Gamma \supp \neg \neg \phi$, then $\Gamma \supp \phi$.
\end{lemma}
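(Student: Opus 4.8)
The plan is to strip the statement down to a pointwise claim about closed formulae and then induct on formula structure, isolating the one step where the classical basis $\basis{C}$ actually does work. First I would unwind the definitions: by the clause (Inf) together with the consequence clause and the routine persistence (monotonicity) of $\supp$ along base-extensions, $\Gamma \supp \neg\neg\phi$ says precisely that for every $\base{B}\in\basis{C}$ and every $\base{C}\basesup{\basis{C}}\base{B}$ with $\supp_{\base{C}}\psi$ for all $\psi\in\Gamma$ we have $\supp_{\base{C}}\neg\neg\phi$, and $\Gamma\supp\phi$ carries the identical quantifier prefix with conclusion $\supp_{\base{C}}\phi$. Hence it suffices to establish the \emph{pointwise} implication $\supp_{\base{C}}\neg\neg\phi \Rightarrow \supp_{\base{C}}\phi$ for every $\base{C}\in\basis{C}$ and every closed $\phi$; the open case reduces to this since $\neg\neg(-)$ commutes with the substitutions used in the open-formula clauses. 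Throughout, recall $\neg\chi = \chi\to\bot$.

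I would then induct on $\phi$. The compound cases are intuitionistically robust double-negation shifts and use no feature of $\basis{C}$. For $\phi=\psi\to\chi$, to prove $\psi\supp_{\base{C}}\chi$ I fix $\base{D}\basesup{\basis{C}}\base{C}$ with $\supp_{\base{D}}\psi$ and reduce $\supp_{\base{D}}\chi$ to $\supp_{\base{D}}\neg\neg\chi$ by the induction hypothesis on $\chi$; the latter holds because any $\base{E}\basesup{\basis{C}}\base{D}$ with $\supp_{\base{E}}\neg\chi$ refutes $\psi\to\chi$ at every further extension (using $\supp_{\base{E}}\psi$ by monotonicity), so $\supp_{\base{D}}\neg\neg(\psi\to\chi)$ forces $\supp_{\base{E}}\bot$. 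The cases $\phi=\forall x\,\psi$ and $\phi=\foralltwo X\,\psi$ are handled the same way, pushing the double negation onto an instance $\psi[x\mapsto a]$ (resp. $\psi[X\mapsto P]$) via the already-validated elimination axioms $(\ern\forall)$ and $(\ern\foralltwo)$ from Lemma~\ref{lem:axioms}, and then invoking the induction hypothesis on the smaller formula.

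The atomic case $\phi = p$ is the crux. Here I want $\proves_{\base{C}}p$ from $\supp_{\base{C}}\neg\neg p$. I would form the first-level extension $\base{C}^\ast := \base{C}\cup\{\,p \Rightarrow r \mid r\in\setPredCons_0\,\}$, which again lies in $\basis{C}$. One checks $\supp_{\base{C}^\ast}\neg p$: any base extending $\base{C}^\ast$ that derives $p$ derives every propositional atom, hence supports $\bot$. Feeding this into $\supp_{\base{C}}\neg\neg p$ yields $\supp_{\base{C}^\ast}\bot$, i.e.\ every propositional atom is $\base{C}^\ast$-derivable. Now I exploit that for zero- and first-level bases the relation $\proves$ (Definition~\ref{def:der-base}) is the least fixpoint of a monotone atomic closure operator: if $\not\proves_{\base{C}}p$, then the added rules $p\Rightarrow r$ never fire, so $\base{C}$ and $\base{C}^\ast$ derive exactly the same atoms, forcing every propositional atom to be already $\base{C}$-derivable, i.e.\ $\supp_{\base{C}}\bot$. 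Explosion — available from the validity of $\efq$ in Lemma~\ref{lem:axioms} — then gives $\supp_{\base{C}}p$, contradicting $\not\proves_{\base{C}}p$. Therefore $\proves_{\base{C}}p$, as required.

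The main obstacle is exactly this atomic step, and it is where the restriction to $\basis{C}$ is load-bearing: the argument that ``the new rules never fire, so the derivable atoms are unchanged'' depends on $\proves$ being a plain monotone closure over atoms, which holds only because $\basis{C}$ contains no second-level (discharge) rules. For $\basis{I}$ this closure operator would have to be replaced by one over hypothetical derivations, the fixpoint coincidence fails, and with it $\dne$ — which is the semantic reason double-negation elimination separates $\basis{C}$ from $\basis{I}$. The only auxiliary facts the plan leans on beyond the stated lemmas are the routine monotonicity of $\supp$ and the explosion property of $\bot$, the latter extracted from $\efq$ (Lemma~\ref{lem:axioms}); I would record both explicitly before the induction.
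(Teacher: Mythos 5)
Your proposal is correct and takes essentially the same approach as the paper's proof: reduce to the pointwise claim $\supp_{\base{B}}\neg\neg\phi \Rightarrow \supp_{\base{B}}\phi$, induct on the structure of $\phi$ with intuitionistic double-negation shifts in the $\to$ and quantifier cases, and, in the atomic case, use Sandqvist's trick of extending the base by rules $p \Rightarrow r$ so that $\neg p$ becomes supported and $\supp_{}\bot$ is forced at the extension. Your only departure is in how conservativity of that extension is established --- you argue by contradiction via the least-fixpoint characterization of first-level derivability and then close with the validity of $\efq$, whereas the paper directly traces the derivation of $p$ and strips out the added rules (for $p \in \setPredCons_0$ the clause $(\bot)$ already yields $\proves_{\base{C}} p$, so the $\efq$ detour is dispensable) --- and both variants hinge on exactly the same feature of $\basis{C}$, the absence of second-level rules, which you correctly identify as the load-bearing point.
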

\begin{proof} 
    Let $\base{X} \in \basis{B}$ be such that $\supp_{\base{X}} \neg \neg \phi$. We proceed by induction on $\phi$ to show that $\supp_{\base{X}} \phi$:
    \begin{itemize}
        \item[--]$\phi = P \in \setClosedAtom$. We follow the argument by Sandqvist~\cite{Sandqvist2009CL}. Let $\base{C}$ be defined as follows:
        \[
        \base{C} := \base{X} \cup \{ P \Rightarrow Q \mid Q \in \setClosedAtom \}
        \]
        It is easy to see that $\supp_{\base{C}} \neg P$. Moreover, since $\base{C} \basesup{\basis{C}} \base{X}$, we have $\supp_{\base{C}} \bot$ by (Inf). It follows by ($\bot$) that $\proves_{\base{C}} P$. 

        It remains to show that $\proves_{\base{X}} P$. Consider how $\proves_{\base{C}} P$ obtains. Suppose that it requires a rule in $\base{C}-\base{X}$, then there is a first such instance in the computation. For that instance however, it must be that $\proves_{\base{C}} P$ holds without the use of rules in $\base{C}-\base{X}$. Hence, $\proves_{\base{C}} P$ obtains only using rules in $\base{X}$. Whence, $\proves_{\base{X}} P$, as required.
        \item[--]$\phi = \phi_1 \to \phi_2$. Let $\base{Z} \basesup{\basis{C}} \base{Y} \basesup{\basis{C}} \base{X}$ be arbitrary such that:
        \begin{itemize}
            \item[(i)] $\supp_{\base X} \neg \neg (\phi_1 \to \phi_2)$
            \item[(ii)] $\supp_{\base Y} \phi_1$
            \item[(iii)]  $\phi_2\supp_{\base Z} \bot$
        \end{itemize}
        From (ii) and (iii), observe $\phi_1 \to \phi_2 \supp_{\base Z} \bot$.  Hence, from (i), infer $\supp_{\base Z} \bot$. Thus, by (Inf) and ($\to$) on (i), $\supp_{\base Y} \neg \neg \phi_2$. By the induction hypothesis (IH), $\supp_{\base Y} \phi_2$. Whence, by (Inf) and ($\to$), $\supp_{\base X} \phi_1 \to \phi_2$, as required. 
        
        \item[--]$\phi = \forall x \psi$. Let $\base{Z} \basesup{\basis{C}} \base{Y} \basesup{\basis{C}} \base{X}$ be arbitrary such that: 
        \begin{itemize}
            \item[(i)] $\supp_{\base X} \neg \neg \forall x \phi$
            \item[(ii)] $\phi[x \mapsto a] \supp_{\base Y} \bot$ for every $a \in \setIndCons$
            \item[(iii)] $\supp_{\base Z} \forall x \phi$
        \end{itemize}
        By ($\forall$) on (iii), $\supp_{\base Z} \phi[x \mapsto a]$ for every $a \in \setIndCons$. Thus, by (ii), $\supp_{\base Z} \bot$. Hence, by (Inf), $\forall x \phi \supp_{\base Y} \bot$. Whence, by (i),  $\supp_{\base Y} \bot$. It follows by (Inf) and (ii) that $\supp_{\base X} \neg \neg \phi[x \mapsto a]$ for all  $a \in \setIndCons$. Where, by the IH, we infer $\supp_{\base X} \phi[x \mapsto a]$ for all  $a \in \setIndCons$. Finally, from ($\forall$), we obtain $\supp_{\base X} \forall x \phi$, as required. 
        \item[--]$\phi = \foralltwo X\! \psi$. This follows \emph{mutatis mutandis} on the preceding case.
    \end{itemize}
    This completes the induction.
\end{proof}

It is easy to see that this lemma fails for $\base{I}$. It suffices to find a base $\base{A}$ such that $\supp_{\base{A}} \neg \neg A$ but not $\supp_{\base{A}} A$ for an atom $A$. Let $\base{A}$ comprise the rules 
\[
\infer{B}{\deduce{B}{[A]}} \qquad \infer{C}{B}
\]
for some $B \in \setClosedAtom$ any $C \in \setClosedAtom$. The atom $B$ essentially represents $\bot$ in $\base{A}$ --- for any $\base{B} \supseteq \base{A}$, we have $\supp_{\base{B}} B$ iff $\supp_{\base{B}} \bot$. Observe that this counterexample relies essentially on having second-level rules.

It remains to show that support admits all the rules in Definition~\ref{def:consequence}.

\begin{lemma} \label{lem:rules}
    The following hold for $\basis{B} \in \{\basis{C},\basis{I}\}$:
        \begin{itemize}
       \item[--]$(\textsc{Hypothesis})$. If $\phi \in \Gamma$, then $\Gamma \supp \phi$.
       \item[--]$(\textsc{Modus Ponens})$. If $\Gamma \supp \phi$ and $\Gamma \supp  \phi \to \psi$, then $\Gamma \supp  \psi$.
       \item[--]$(\textsc{First-order Generalization})$. If $\Gamma \supp  \psi \to \phi$ and $x \not \in \FIV{\psi, \Gamma}$, then $\Gamma \supp \psi \to \forall x\phi$.
       \item[--]$(\textsc{Second-order Generalization})$. If $\Gamma \supp \psi \to \phi$ and $X \not \in \FPV{\psi, \Gamma}$, then $\Gamma \supp  \psi \to \foralltwo X\!\phi$.
    \end{itemize}
\end{lemma}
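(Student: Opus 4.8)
The plan is to verify each of the four closure conditions by unfolding the semantic clauses of Figure~\ref{fig:support}, relying throughout on two background facts that hold uniformly for $\basis{B} \in \{\basis{C},\basis{I}\}$ (so that the argument is insensitive to the choice of basis). The first is \emph{persistence}: if $\base{C} \basesup{\basis{B}} \base{B}$ and $\supp_{\base{B}} \chi$, then $\supp_{\base{C}} \chi$; this follows by a routine induction on $\chi$, the base case (At) being the monotonicity of $\proves_{\base{B}}$ in $\base{B}$. The second is the reflexivity of base-extension, $\base{B} \basesup{\basis{B}} \base{B}$ for $\base{B} \in \basis{B}$, which lets us instantiate (Inf) at the base itself.

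The first two cases are immediate. For (Hypothesis), fix $\base{B}$ and $\base{C} \basesup{\basis{B}} \base{B}$ with $\supp_{\base{C}} \chi$ for every $\chi \in \Gamma$; since $\phi \in \Gamma$ this already supplies $\supp_{\base{C}} \phi$, so $\Gamma \supp_{\base{B}} \phi$ by (Inf), and hence $\Gamma \supp \phi$. For (Modus Ponens), again fix $\base{C} \basesup{\basis{B}} \base{B}$ supporting all of $\Gamma$. From $\Gamma \supp \phi$ we obtain $\supp_{\base{C}} \phi$, and from $\Gamma \supp \phi \to \psi$ together with ($\to$) we obtain $\phi \supp_{\base{C}} \psi$; instantiating (Inf) at $\base{C} \basesup{\basis{B}} \base{C}$ with $\supp_{\base{C}} \phi$ yields $\supp_{\base{C}} \psi$, whence $\Gamma \supp \psi$.

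The substantive work is in the two generalization rules, and the crux is a substitution lemma: when $x \notin \FIV{\Gamma,\psi}$, the hypothesis $\Gamma \supp \psi \to \phi$ yields, for each $a \in \setIndCons$, the instance $\Gamma \supp \psi \to \phi[x \mapsto a]$. This is exactly where the eigenvariable side-condition does its work: because $x$ is free in neither $\Gamma$ nor $\psi$, the open-formula convention lets us instantiate $x$ inside $\phi$ without disturbing $\Gamma$ or $\psi$, so that $(\psi \to \phi)[x \mapsto a] = \psi \to \phi[x \mapsto a]$ and the universal quantification over $a$ in the convention commutes past the conditions $\supp_{\base{C}} \Gamma$ and $\supp_{\base{D}} \psi$ appearing in (Inf). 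Granting this, I would prove (First-order Generalization) by unfolding the goal $\Gamma \supp \psi \to \forall x\phi$ via (Inf), ($\to$), and ($\forall$): fix $\base{C} \basesup{\basis{B}} \base{B}$ with $\supp_{\base{C}} \Gamma$, then $\base{D} \basesup{\basis{B}} \base{C}$ with $\supp_{\base{D}} \psi$, and an arbitrary $a \in \setIndCons$. Persistence gives $\supp_{\base{D}} \Gamma$, so the substitution lemma applied at $\base{D}$ gives $\psi \supp_{\base{D}} \phi[x \mapsto a]$, and reflexivity together with $\supp_{\base{D}} \psi$ yields $\supp_{\base{D}} \phi[x \mapsto a]$. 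As $a$, $\base{D}$, $\base{C}$, and $\base{B}$ were arbitrary, this is precisely $\Gamma \supp \psi \to \forall x\phi$.

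The (Second-order Generalization) case goes through \emph{mutatis mutandis}, replacing the individual-variable convention and ($\forall$) by their predicate-variable counterparts, the convention for $X \in \FPV{\phi}$ and clause ($\foralltwo$), and substituting predicate-constants $P \in \setPredCons_{\alpha(X)}$ for $X$ in place of constants for $x$. I expect the main obstacle to be bookkeeping rather than conceptual: making the substitution lemma precise for formulae carrying several free individual- and predicate-variables, and confirming that instantiating the generalized variable genuinely leaves $\Gamma$ and $\psi$ fixed --- which is exactly the content of the freshness conditions $x \notin \FIV{\Gamma,\psi}$ and $X \notin \FPV{\Gamma,\psi}$. The second-order case introduces no extra semantic content, since our ($\foralltwo$) clause reads the quantifier substitutionally over predicate-constants, in exact parallel with ($\forall$).
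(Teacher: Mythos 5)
Your proposal is correct and takes essentially the same route as the paper's proof: the generalization cases rest on reading the open judgment $\Gamma \supp \psi \to \phi$ substitutionally, so that the freshness conditions $x \notin \FIV{\Gamma, \psi}$ and $X \notin \FPV{\Gamma, \psi}$ let instantiation of the generalized variable pass over $\Gamma$ and $\psi$, after which $(\forall)$ and $(\foralltwo)$ close the argument. The paper is merely terser --- it defers (Hypothesis), (Modus Ponens), and the first-order case to the cited first-order treatment and proves only the $\foralltwo$ case, splitting on whether $X \in \FPV{\phi}$ --- whereas you make explicit the persistence and reflexivity bookkeeping, and the substitution step, that the paper leaves implicit.
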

\begin{proof}
       Except for the cases concerning $\foralltwo$, the proofs are identical to those in Gheorghiu~\cite{Gheorghiu2025fol}. Therefore, it remains to show: If $\Gamma \supp \psi \to \phi$ and $X \not \in \FPV{\psi, \Gamma}$, then $\Gamma \supp  \psi \to \foralltwo X\!\phi$. 

        Suppose $\Gamma \supp \psi \to \phi$. Let $\base{B} \in \basis{B}$ be arbitrary such that  $\supp_{\base{B}} \chi$ for $\chi \in \psi, \Gamma$. By (Inf) and $(\to)$, we have $\supp_{\base{B}} \phi$. We have two cases, either $X \in \FPV{\phi}$ or $X \not \in \FPV{\phi}$. In the first case, we have $\supp \foralltwo X\! \phi$ by (wff). In the second case, we $\supp \foralltwo X\! \phi$ by ($\forall$) as $\phi[X \mapsto P] = \phi$ for any $P \in \setPredCons$.
\end{proof}

The following is an immediate corollaries of the lemmas above:

\begin{theorem}[Soundness]
    If $\Gamma \proves_{\calculus{HC}} \phi$ (resp. $\Gamma \proves_{\calculus{HI}} \phi$), then $\Gamma \supp \phi$ when $\basis{B}=\basis{C}$ (resp. $\basis{B}=\basis{I}$).
\end{theorem}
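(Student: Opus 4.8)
The plan is to argue by induction on the structure of the derivation witnessing $\Gamma \proves_{\mathsf{H}} \phi$, following the five clauses of Definition~\ref{def:consequence}, taking $\mathsf{H} = \mathsf{HC}$ with $\basis{B} = \basis{C}$ (resp.\ $\mathsf{H} = \mathsf{HI}$ with $\basis{B} = \basis{I}$). Since each clause of the consequence relation corresponds precisely to one of the lemmas already established, no fresh semantic reasoning is required: the proof merely assembles the pieces, which is why it is flagged above as an immediate corollary.

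First I would dispatch the two base cases. For \textsc{Hypothesis}, where $\phi \in \Gamma$, the conclusion $\Gamma \supp \phi$ is immediate from the \textsc{Hypothesis} clause of Lemma~\ref{lem:rules}. For \textsc{Axiom}, where $\phi$ is an instance of one of the schemes in Figure~\ref{fig:sol}, I would split on whether $\phi$ is a common axiom or an instance of $\dne$. If $\phi$ is a common axiom, then Lemma~\ref{lem:axioms} gives $\supp \phi$ for both choices of basis. If $\mathsf{H} = \mathsf{HC}$ and $\phi$ is an instance $\neg \neg \psi \to \psi$ of $\dne$, then, working over $\basis{C}$, I would apply Lemma~\ref{lem:dne} to the instance $\neg \neg \psi \supp \neg \neg \psi$ (which holds by \textsc{Hypothesis}) to obtain $\neg \neg \psi \supp \psi$, and then conclude $\supp \neg \neg \psi \to \psi$ by the clause $(\to)$. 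In either case $\supp \phi$ holds, and I would weaken this to $\Gamma \supp \phi$, observing that the consequent in clause (Inf) holds unconditionally once $\supp_{\base{C}} \phi$ holds for every $\base{C}$.

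The inductive cases are then direct applications of Lemma~\ref{lem:rules}. For \textsc{Modus Ponens}, the induction hypothesis yields $\Gamma \supp \phi$ and $\Gamma \supp \phi \to \psi$, and the \textsc{Modus Ponens} clause of Lemma~\ref{lem:rules} delivers $\Gamma \supp \psi$. For \textsc{First-order Generalization} and \textsc{Second-order Generalization}, the side conditions $x \not\in \FIV{\psi, \Gamma}$ (resp.\ $X \not\in \FPV{\psi, \Gamma}$) transfer verbatim, and the induction hypothesis $\Gamma \supp \psi \to \phi$ feeds directly into the corresponding clause of Lemma~\ref{lem:rules} to give $\Gamma \supp \psi \to \forall x \phi$ (resp.\ $\Gamma \supp \psi \to \foralltwo X\! \phi$). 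This closes the induction.

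Since every step is a citation of a previously proved lemma, I do not anticipate a genuine obstacle. The only point requiring a moment's care is the $\dne$ case: Lemma~\ref{lem:dne} is phrased as a closure property of the support relation rather than as support for the axiom formula itself, so I would make explicit the passage from $\neg \neg \psi \supp \psi$ to $\supp \neg \neg \psi \to \psi$ via $(\to)$, and likewise be explicit about the weakening from $\supp \phi$ to $\Gamma \supp \phi$ in the axiom case.
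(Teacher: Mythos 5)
Your proposal is correct and matches the paper's approach exactly: the paper states the theorem as an immediate corollary of Lemmas~\ref{lem:axioms}, \ref{lem:dne}, and~\ref{lem:rules}, and your induction over Definition~\ref{def:consequence} is precisely the assembly the paper leaves implicit. Your explicit handling of the $\dne$ case --- passing from the closure property of Lemma~\ref{lem:dne} to support for the axiom via $(\to)$ --- is a worthwhile clarification of a step the paper glosses over, but it is not a different route.
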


\subsection{Completeness} \label{sec:completeness}

We show that $\mathsf{HC}$ and $\mathsf{HI}$ are complete for support ($\supp$) under basis $\basis{C}$ and $\basis{I}$, respectively. To this end, we follow the technique developed by Sandqvist~\cite{Sandqvist2015} for intuitionistic propositional logic. We briefly outline it here and give the formal details below. 

This construction is quite simple in both in principle and execution. It has been deployed to handle substructural logics with ease and apparently echoes some fundamental results on proof-search by Mints given some decades earlier --- see Gheorghiu et al.~\cite{GheorghiuGuPym2023IMLL-Conference,GheorghiuGuPym2024IMLL-Journal,GheorghiuGuPym2025BI,Gheorghiu2025mints}. 
As Sandqvist \cite{Sandqvist2015} remarks:
\begin{quote}
`The mathematical resources required for the purpose are quite elementary; there will be no need to invoke canonical models, K\"onig’s lemma, or even bar induction. The proof proceeds, instead, by \emph{simulating} an intuitionistic deduction using atomic formulae within a base specifically tailored to the inference at hand.'
\end{quote}
The key idea underlying this simulation technique is the systematic translation of formulas into atomic formulae. 

Fix $\basis{B}= \basis{C}$ (resp.  $\basis{B}= \basis{I}$) and let $\phi$ by a formula such that $\supp \phi$. Suppose $\phi$ has a subformula $A \land B$ in which $A, B \in \setPredCons_0 \subseteq \setClosedAtom$. Then we include the following rules in the `specifically tailored' base $\base{N}$ for $\phi$:
\[
\infer{C}{~~A & B~~} \qquad \infer{~~A~~}{C} \qquad \infer{~~B~~}{C}
\]
where $C \in \setPredCons$ is a fresh atomic formula representing the conjunction $A \land B$. This means that $C$ behaves in $\base{N}$ as $A \land B$ behaves in $\mathsf{HC}$ (resp. $\mathsf{HI}$). More generally, each subformula $\chi$ of $\phi$ is assigned a corresponding basic counterpart $\flatop{\chi}$ --- for example, $\flatop{(A \land B)} := C$. 

The major work of the completeness proof is establishing the equivalence of $\chi$ and $\flatop{\chi}$ within $\base{N}$:
\[
\chi \supp_{\base{N}} \flatop{\chi} \quad \text{and} \quad \flatop{\chi} \supp_{\base{N}} \chi.
\]
Since we assume $\supp \phi$, it follows that $\supp \flatop{\phi}$, and given that every rule in $\base{N}$ corresponds to an intuitionistic natural deduction rule, we conclude $\vdash \phi$, as required. We now proceed to the detailed account of this strategy. 

To begin, suppose we have a set of formulae $\Gamma$ and a formula $\gamma$. Let $\Pi$ be set of predicates that occur either in $\Gamma$ or $\gamma$. Let $\eigeninds$ and $\eigenpreds$ be the set of individual- and predicate-constants, respectively, that do \emph{not} occur in $\Gamma$ or $\gamma$ --- we call these elements \emph{eigen}-constants
and \emph{eigen}-predicates. Distinguishing these sets will enable us to handle the quantifiers. 

Let $\Xi$ be the set of atoms over $\Pi$ using any terms or predicates, including the eigen-constants and eigen-predicates. Let $\Xi^\ast$ be the second-order language over those atoms; that is, all formulae generated by the grammar:
\[
\phi ::= A \in \Xi \mid \phi \to \phi \mid \forall x \phi \mid \foralltwo X\! \phi
\]
Intuitively, $\Xi^\ast$ captures all the kinds of complex formulae that we may encounter as $\Gamma$ and $\gamma$ unfold in the semantics. 

Fix an injection $\flatop{(-)}: \Xi^\ast \to \setPredCons_0$ that is the identity on $\Xi^\ast \cap \setPredCons_0$ --- that is, $\flatop{P}=P$ for $P \in \Xi^\ast \cap \setPredCons_0$. This \emph{flattening} operator will enable us to bridge support and provability. 

% \todo{explanation?}

% To handle the quantifiers, which introduce atomic formulae alien to $\Gamma$ and $\gamma$, we introduce \emph{eigenvariables}:
% \begin{itemize}
% \item[--]Let $\setEigenCons \subseteq \setIndCons$ and $\setEigenPreds \subseteq \setPredCons$ be the set of individual- and predicate-constants that do not appear in either $\Gamma$ or $\phi$, respectively
%     \item[--]let $\alpha:\setIndVar \to \setEigenCons$ and $\beta:\setPredVar \to \setEigenPreds$ be arity-preserving bijections
% \item[--]Extend $\flatop{(-)}$ to open formulae using $\alpha$ and $\beta$. That is, if $\{x_1,\ldots,x_m\} = \FIV{\phi}$ and $\{X_1,\ldots,X_n\} = \FPV{\phi}$, then define 
% \[
% \bar{\phi} := \phi[x_1 \mapsto \alpha(x_1)],\ldots,[x_m \mapsto \alpha(x_m)][X_1 \mapsto \beta(X_1)],\ldots,[X_n \mapsto \beta(X_n)]
% \]
% and assign
% \[
% \flatop{\phi} :=  \flatop{(\bar{\phi})} \tag{$\textsc{eigen}$} \label{def:eigen-ind}
% \]
% \end{itemize}
% This is the encoding of formulae for the simulation. 

As $\flatop{(-)}$ is an injection, we have a left-inverse $\natop{(-)}:\setPredCons_0 \to \Xi^\ast$ such that $\natop{P} = P$ for $P \not \in \Xi^\ast \cap \setPredCons_0$. We extend both $\flatop{(-)}$ and $\natop{(-)}$ to sets point-wise,
\[
\flatop{\Gamma} = \{\flatop{\gamma} \mid \gamma \in \Gamma\} \qquad \text{and} \qquad \natop{\atset{P}} := \{\natop{P} \mid P \in \atset{P}\}
\]

We consider two bases:
\begin{itemize}
    \item[--]The classical base $\base{K}$ is given by all instances of the atomic rules in Figure~\ref{fig:simulation} \emph{including} $\flatop{(\dne)}$
    \item[--]The intuitionistic base $\base{J}$ is given by all instances of the atomic rules in Figure~\ref{fig:simulation} \emph{excluding} $\flatop{(\dne)}$
\end{itemize}
By all instances, we mean that $\phi, \psi, \chi,\xi$ range over $\Xi^\ast$, $x$ rangers over $\setIndVar$, and $a$ ranges over $\setIndCons$, $X$ rangers over $\setPredVar$, $P$ ranges over $\setPredCons$, $e$ ranges over $\eigeninds$, and $E$ ranges over $\eigenpreds$. In the case of $\irn \forall$ and $\irn \foralltwo$, we restrict such that $x \not \in \FIV{\phi}$ and $e$ does not occur in $\phi$, and $X \not \in \FPV{\phi}$ and $E$ does not occur in $\phi$. 

We now show some essential technical results about $\base{N} \in \{\base{K}, \base{J}\}$ that collectively deliver the desired completeness theorem(s).  The first shows that provability in $\base{N}$ obeys the semantic clauses of Figure~\ref{fig:support}. The second shows that $\flatop{\phi}$ and $\phi$ are semantically equivalent relative to $\base{N}$. The third shows that provability in $\base{K}$ (resp. $\base{J}$) corresponds to provability in $\mathsf{HC}$ (resp. $\mathsf{HI}$). 

\begin{figure}[t]
\hrule
\vspace{2mm}
\[
\begin{array}{ l @{\hspace{-1mm}}  r }
      \flatop{\big(\phi \to (\psi \to \phi)\big)} & \hspace{0.4\linewidth}  (\textsc{K}) \\
  \flatop{\big((\phi \to (\psi \to \chi)) \to \big((\phi \to \psi) \to (\phi \to \chi)\big)\big)} & (\textsc{S}) \\
     \flatop{\big(\forall x \phi \to \phi[x \mapsto t]\big)}  & 
   (\ern \forall) \\[1mm]
     \flatop{\big(\foralltwo X\! \phi \to \phi[X \mapsto P]\big)}  & 
   (\ern \foralltwo) \\[1mm]
   \flatop{\big((\phi \to \psi) \to \big((\phi \to \neg \psi) \to \neg \phi \big)\big)} & (\irn \neg)\\[1mm]
  \flatop{\big((\phi \to \bot) \to (\phi \to \psi)\big)} & (\efq) \\[1mm]
    \flatop{\phi}, \flatop{(\phi \to \psi)} \Rightarrow \flatop{\psi} & (\mp) \\[1mm]
\flatop{(\phi \to \psi)} \Rightarrow \flatop{(\phi \to \forall x \psi[e \mapsto x])} & (\irn \forall) \\[1mm]
\flatop{(\phi \to \psi)} \Rightarrow \flatop{(\phi \to \foralltwo X\! \psi[E \mapsto X])} & (\irn \foralltwo) \\
   \hspace{-2ex} \dotfill & \dotfill \\[1mm]
    \flatop{\big(\neg \neg \phi \to \phi\big)} & (\dne)
\end{array}
\]
\vspace{2mm}
\hrule
\caption{Simulation Base for Second-order Logic} \label{fig:simulation}
\end{figure}

\begin{lemma} \label{lem:pre-dagger}
 Let $\basis{B}=\basis{C}$ (resp. $\basis{B} = \basis{I}$) and $\base{N}=\base{K}$ (resp. $\base{N}=\base{J}$). The following hold for any $\base{N}' \basesup{\basis{B}} \base{N}$:
    \begin{enumerate}[label=(\roman*)]
        \item[--]$\proves_{\base{N}'} \flatop{(\phi \to \psi)}$ iff $\flatop{\phi} \proves_{\base{N}'} \flatop{\psi}$
        \item[--]$\proves_{\base{N}'} \flatop{(\forall x \phi)}$ iff $\proves_{\base{N}'} \flatop{(\phi[x \mapsto c])}$ for any $c \in \setIndCons$.
        \item[--]$\proves_{\base{N}'} \flatop{(\foralltwo X\! \phi)}$ iff $\proves_{\base{N}'}  \flatop{(\phi[X \mapsto P])}$ for any appropriate $P \in \setPredCons$.
        \item[--]$\atset{P} \proves_{\base{N}'} P$ iff, for any $\base{N}'' \basesup{\basis{B}} \base{N'}$, if $\proves_{\base{N}''} Q$ for $Q \in \atset{P}$, then $\proves_{\base{N}''} P$.
    \end{enumerate}
\end{lemma}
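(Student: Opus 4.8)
The plan is to split the four items into two groups. Item (iv) is a self-contained property of the atomic derivability relation of Definition~\ref{def:der-base}, entirely independent of the logical connectives, whereas items (i)--(iii) exploit the specific simulation rules of Figure~\ref{fig:simulation} that $\base{N}$, and hence every $\base{N}' \basesup{\basis{B}} \base{N}$, contains. As a preliminary I would record two structural facts about $\proves$, each by routine induction on the \textsc{App}-derivation and carrying over verbatim from the first-order treatment of Gheorghiu~\cite{Gheorghiu2025fol}: \emph{monotonicity} (if $\atset{P} \proves_{\base{S}} P$ and $\base{T} \supseteq \base{S}$, then $\atset{P} \proves_{\base{T}} P$) and \emph{cut} (if $\atset{P} \proves_{\base{S}} P$ and $\atset{Q} \proves_{\base{S}} Q$ for every $Q \in \atset{P}$, then $\atset{Q} \proves_{\base{S}} P$). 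These survive because \textsc{App} performs no substitution into rules.

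For item (iv), the left-to-right direction is immediate: given $\atset{P} \proves_{\base{N}'} P$ and an $\base{N}'' \basesup{\basis{B}} \base{N}'$ deriving each $Q \in \atset{P}$, monotonicity lifts the derivation to $\base{N}''$ and cut composes it with the derivations of the $Q$'s to yield $\proves_{\base{N}''} P$. For the converse I would adjoin the hypotheses as axioms: set $\base{N}'' := \base{N}' \cup \{\, \emptyset \Rightarrow Q \mid Q \in \atset{P} \,\}$, which remains a legitimate base of $\basis{B}$ since the new rules are zero-level. Then $\proves_{\base{N}''} Q$ for each $Q \in \atset{P}$, so the hypothesis gives $\proves_{\base{N}''} P$; an induction on this derivation, replacing every appeal to an adjoined axiom $\emptyset \Rightarrow Q$ by an appeal to \textsc{Ref} on the hypothesis $Q$, then yields $\atset{P} \proves_{\base{N}'} P$.

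The ``elimination'' directions of (i)--(iii) are uniform and easy. For the forward direction of (i), from $\proves_{\base{N}'} \flatop{(\phi \to \psi)}$ I get $\flatop{\phi} \proves_{\base{N}'} \flatop{(\phi \to \psi)}$ by monotonicity and $\flatop{\phi} \proves_{\base{N}'} \flatop{\phi}$ by \textsc{Ref}, so the rule $(\mp)$ with \textsc{App} delivers $\flatop{\phi} \proves_{\base{N}'} \flatop{\psi}$. The forward directions of (ii) and (iii) are identical, using the axiom rules $(\ern{\forall})$ and $(\ern{\foralltwo})$ in place of the hypothesis. For the backward directions of (ii) and (iii) I would use the generalization rules directly, since the base contains them: from $\proves_{\base{N}'} \flatop{(\phi[x \mapsto c])}$ for all $c$, instantiate at an eigen-constant $e \in \eigeninds$ not occurring in $\phi$, weaken by $(\textsc{K})$ and $(\mp)$ to $\proves_{\base{N}'} \flatop{(\theta \to \phi[x \mapsto e])}$ for a fixed propositional theorem $\theta$ (e.g. $Q \to Q$ with $Q \in \setPredCons_0$), apply $(\irn{\forall})$ to obtain $\proves_{\base{N}'} \flatop{(\theta \to \forall x \phi)}$, and discharge $\theta$ by $(\mp)$; the predicate case (iii) runs \emph{mutatis mutandis} with an eigen-predicate $E \in \eigenpreds$. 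The freshness side-conditions attached to $(\irn{\forall})$ and $(\irn{\foralltwo})$ in Figure~\ref{fig:simulation} are exactly what licenses these steps, mirroring Proposition~\ref{prop:eigenvariable}.

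The main obstacle is the backward direction of (i): from $\flatop{\phi} \proves_{\base{N}'} \flatop{\psi}$ to conclude $\proves_{\base{N}'} \flatop{(\phi \to \psi)}$. Unlike the quantifier cases, the simulation base has \emph{no} primitive introduction rule for implication, so this step is an internal \emph{deduction theorem} that must be synthesised from the flattened combinators $(\textsc{K})$ and $(\textsc{S})$ together with $(\mp)$. I would argue by induction on the \textsc{App}-derivation of $\flatop{\psi}$ from $\flatop{\phi}$, lifting each intermediate atom $\flatop{\chi}$ to $\proves_{\base{N}'} \flatop{(\phi \to \chi)}$: the \textsc{Ref} step on $\flatop{\phi}$ lifts to $\flatop{(\phi \to \phi)}$ via $(\textsc{K})$ and $(\textsc{S})$; an $(\mp)$ step lifts using $(\textsc{S})$; and a flattened-axiom step lifts by prefixing $\phi$ through $(\textsc{K})$ and $(\mp)$. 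The delicate points, which I expect to require the most care and which follow the first-order method of Gheorghiu~\cite{Gheorghiu2025fol}, are (a) that the \textsc{App}-derivation may invoke rules of $\base{N}'$ lying outside $\base{N}$, so the lifting must be organised so as to accommodate them, and (b) that the eigen-variable and eigen-predicate conditions on the generalization steps must be shown to survive the prefixing by $\phi$. Once (i) is secured, items (ii) and (iii) close in the same register, completing the proof.
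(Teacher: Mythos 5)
Most of your proposal coincides with the paper's own proof, and those parts are correct: the forward directions of (i)--(iii) (\textsc{Ref} plus monotonicity feeding $(\mp)$, respectively the flattened elimination axioms) are exactly the paper's steps; your (iv) is a correct, self-contained rendering of the argument the paper imports by citing Sandqvist's Lemma 2.2; and your backward direction of (ii)/(iii) is the paper's eigen-constant argument with your fixed theorem $\theta$ in place of the paper's $\top$. Your version of (ii)/(iii) is in fact slightly more robust than the paper's, since prefixing by $(\textsc{K})$ and $(\mp)$ and discharging $\theta$ by $(\mp)$ avoids routing through part (i), which the paper invokes twice at that point.

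The genuine gap is the backward direction of (i), and your ``delicate point (a)'' is not something the lifting can be ``organised to accommodate'': it is where the induction fails outright. The derivation of $\flatop{\psi}$ from $\flatop{\phi}$ takes place in an \emph{arbitrary} basis-extension $\base{N}'$ of $\base{N}$, so it may apply rules of $\base{N}' \setminus \base{N}$; their premises and conclusions need not even lie in the image of $\flatop{(-)}$, and no combination of flat axioms and $(\mp)$ supplies the lifted step for such an application. Indeed the claim fails for such extensions. Take distinct $A, B \in \Xi^\ast \cap \setPredCons_0$ (so $\flatop{A} = A$ and $\flatop{B} = B$) and $\base{N}' := \base{N} \cup \{A \Rightarrow B\}$, a legitimate member of $\basis{C}$ (and of $\basis{I}$). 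Then $\flatop{A} \proves_{\base{N}'} \flatop{B}$ by a single \textsc{App}. But $\proves_{\base{N}'} \flatop{(A \to B)}$ is false: the induction of Lemma~\ref{lem:natural} extends verbatim to $\base{N}'$ with target $\calculus{HC}$ augmented by the rule ``from $A$ infer $B$''; since $A$ is not a theorem of $\calculus{HC}$ (by soundness, as the empty base does not derive $A$), that extra rule is vacuous in hypothesis-free derivations, so the augmented calculus proves exactly the theorems of $\calculus{HC}$, and $A \to B$ is not one of them (again by soundness: the base whose only rule is the zero-level rule $A$ supports $A$ but not $B$). Hence no Herbrand-style induction of the kind you describe can close this case. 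Your point (b), by contrast, is manageable: the side conditions built into $(\irn\forall)$ and $(\irn\foralltwo)$ are precisely the eigenvariable conditions the standard argument needs.

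You should also know that this is exactly where the paper's own proof is thinnest: it disposes of the backward direction of (i) by citing the Deduction Theorem (Herbrand), which concerns derivations from hypotheses in a \emph{fixed} calculus, not derivations in a calculus enlarged by arbitrary additional atomic rules. The device that makes the analogous lemma extension-stable in Sandqvist's intuitionistic completeness proof is a second-level rule $(\flatop{\phi} \Rightarrow \flatop{\psi}) \Rightarrow \flatop{(\phi \to \psi)}$ in the simulation base, which turns this direction into a single \textsc{App} step at every extension; Figure~\ref{fig:simulation} contains no such rule, and for $\basis{C}$ it could not, since $\basis{C}$ excludes second-level rules. So your identification of (i) as the main obstacle is exactly right, but as written your proposal leaves it open, and it cannot be closed by the induction you sketch.
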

\begin{proof} 
We show each one in turn:
\begin{enumerate}[label=(\roman*)]
    \item[--]First, suppose $\proves_{\base{N}'} \flatop{(\phi \to \psi)}$. From this, it is easy to see that $\flatop{\phi} \proves_{\base{N}'} \flatop{(\phi \to \psi)}$. Simultaneously, by \textsc{ref},  From this, it is easy to see that $\flatop{\phi} \proves_{\base{N}'} \flatop{\phi}$. Hence, by $\textsc{app}$ using $\mp$, we have $\flatop{\phi} \proves_{\base{N}'} \flatop{\psi}$.

    Second, suppose $\flatop{\phi} \proves_{\base{N}'} \flatop{\psi}$. Showing $\proves_{\base{N}'}  \flatop{(\phi \to \psi)}$ follows from standard approaches to the Deduction Theorem for classical logic --- see, for example, Herbrand~\cite{herbrand1930recherches}.

    \item[--]First, suppose $\proves_{\base{N}'} \flatop{(\forall x \phi)}$. By $\textsc{app}$ using $\ern \forall$ and $\flatop{\mp}$, we have $\proves_{\base{N}'} \flatop{(\phi[x \mapsto a])}$ for any $a \in \setIndCons$.

    Second, suppose $\proves_{\base{N}'} \flatop{(\phi[x \mapsto a])}$ for any $a \in \setIndCons$. Let $\top \in \Xi^\ast$ such that $\vdash_{\base{N}'} \flatop{\top}$ (e.g., $\top = P \to P$ for any $P \in \Xi$) and let $e \in \eigeninds \subseteq \setIndCons$. From the assumption, it is easy to see that $\flatop{\top} \proves_{\base{N}'} \flatop{(\phi[x \mapsto e])}$. Hence, by (i), $\proves_{\base{N}'} \flatop{\big(\top \to \phi[x \mapsto e]\big)}$. Whence, $\proves_{\base{N}'} \flatop{\big(\top \to \forall x \phi[x \mapsto e][e \mapsto x] \big)}$. Since $\phi[x \mapsto e][e \mapsto x] = \phi$, this says $\proves_{\base{N}'} \flatop{\big(\top \to \forall x \phi \big)}$. Whence, by (i) and the definition of $\top$, conclude $\proves_{\base{N}'} \flatop{\forall x \phi \big)}$. 
    
    \item[--]This argument follows \emph{mutatis mutandis} on the previous case. 
    
    \item[--]This is identical to Sandqvist~\cite{Sandqvist2015} (Lemma~2.2).
\end{enumerate}
This completes the proof.
\end{proof}

\begin{lemma} \label{lem:dagger}
    If $\basis{B}=\basis{C}$ (resp. $\basis{B} = \basis{I}$) and $\base{N}=\base{K}$ (resp. $\base{N}=\base{J}$), then
    \[
    \supp_{\base{N'}} \flatop{\phi} \quad \mbox{iff} \quad \supp_{\base{N'}} \phi
    \]
\end{lemma}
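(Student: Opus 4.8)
The plan is to prove the equivalence by induction on the structure of $\phi \in \Xi^\ast$, where, for the induction to close, I would prove the slightly stronger statement that $\supp_{\base{N}'} \flatop{\phi}$ iff $\supp_{\base{N}'} \phi$ holds \emph{for every} $\base{N}' \basesup{\basis{B}} \base{N}$ simultaneously. This makes the induction hypothesis available at all base-extensions, which is exactly what the implication case will need. Complexity is measured by the number of connectives and quantifiers, so that an instance $\psi[x \mapsto c]$ or $\psi[X \mapsto P]$ counts as strictly simpler than $\forall x \psi$ or $\foralltwo X\! \psi$ and the induction is well-founded. The base case $\phi = A \in \Xi$ is immediate: since $\flatop{A}$ is atomic, both $\supp_{\base{N}'} \flatop{A}$ and $\supp_{\base{N}'} A$ unfold by (At) to atomic derivability, and $A$ and $\flatop{A}$ are interderivable in $\base{N}'$ by the construction of $\flatop{(-)}$ and $\base{N}$ (trivially so when $A$ is propositional, since then $\flatop{A} = A$).

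For the quantifier cases I would simply chain the relevant clauses. For $\phi = \forall x \psi$: by (At), $\supp_{\base{N}'} \flatop{(\forall x \psi)}$ iff $\proves_{\base{N}'} \flatop{(\forall x \psi)}$; by Lemma~\ref{lem:pre-dagger}(ii) this holds iff $\proves_{\base{N}'} \flatop{(\psi[x \mapsto c])}$ for every $c \in \setIndCons$; by (At) again this is $\supp_{\base{N}'} \flatop{(\psi[x \mapsto c])}$ for all $c$; by the induction hypothesis applied to the simpler formula $\psi[x \mapsto c]$ this is $\supp_{\base{N}'} \psi[x \mapsto c]$ for all $c$; and by $(\forall)$ this is exactly $\supp_{\base{N}'} \forall x \psi$. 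The case $\phi = \foralltwo X\! \psi$ runs \emph{mutatis mutandis}, using Lemma~\ref{lem:pre-dagger}(iii) and $(\foralltwo)$ in place of (ii) and $(\forall)$. Neither case appeals to base-extensions beyond $\base{N}'$ itself.

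The implication case $\phi = \phi_1 \to \phi_2$ is where the real work lies. On one side, (At) and Lemma~\ref{lem:pre-dagger}(i) give $\supp_{\base{N}'} \flatop{(\phi_1 \to \phi_2)}$ iff $\flatop{\phi_1} \proves_{\base{N}'} \flatop{\phi_2}$. On the other side, $(\to)$ and (Inf) give that $\supp_{\base{N}'} (\phi_1 \to \phi_2)$ holds iff for every $\base{C} \basesup{\basis{B}} \base{N}'$, $\supp_{\base{C}} \phi_1$ implies $\supp_{\base{C}} \phi_2$; applying the induction hypothesis to $\phi_1$ and $\phi_2$ at the extension $\base{C}$ (this is precisely why the hypothesis is stated for all extensions) and then (At), this condition becomes: for every $\base{C} \basesup{\basis{B}} \base{N}'$, $\proves_{\base{C}} \flatop{\phi_1}$ implies $\proves_{\base{C}} \flatop{\phi_2}$. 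The two sides are then reconciled by Lemma~\ref{lem:pre-dagger}(iv), taken with $\atset{P} = \{\flatop{\phi_1}\}$ and $P = \flatop{\phi_2}$, which states exactly that $\flatop{\phi_1} \proves_{\base{N}'} \flatop{\phi_2}$ iff for every $\base{C} \basesup{\basis{B}} \base{N}'$, $\proves_{\base{C}} \flatop{\phi_1}$ implies $\proves_{\base{C}} \flatop{\phi_2}$. Chaining these equivalences closes the case. I expect the main obstacle to be bookkeeping rather than ideas: one must check that the universal quantification over base-extensions in (Inf) and in Lemma~\ref{lem:pre-dagger}(iv) genuinely coincide, and that the induction hypothesis is invoked at the \emph{arbitrary} extension $\base{C}$ and not merely at $\base{N}'$ --- for which the strengthened, extension-uniform formulation of the hypothesis is essential.
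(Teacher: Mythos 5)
Your proof follows essentially the same route as the paper's: structural induction on $\phi$ with the statement quantified over all extensions $\base{N}' \basesup{\basis{B}} \base{N}$ (exactly how the paper's lemma is stated), chaining (At), Lemma~\ref{lem:pre-dagger}, the induction hypothesis, and the relevant semantic clause, with clause (iv) of Lemma~\ref{lem:pre-dagger} reconciling (Inf) with atomic derivability in the implication case. The paper displays only the $\foralltwo$ case and defers the remaining cases (atoms, implication) to Gheorghiu~\cite{Gheorghiu2025fol}; your write-up simply makes those deferred cases explicit.
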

\begin{proof}
   This proceeds by structural induction on $\phi$ using Lemma~\ref{lem:pre-dagger}. We illustrate the case where $\phi = \foralltwo X\! \psi$, the others being similar:
   \begin{align}
        \supp_{\base{A}'} \flatop{(\foralltwo X\! \psi)} & \quad \mbox{iff} \quad \mbox{$\proves_{\base{A}'} \flatop{(\foralltwo X\! \psi)}$} \tag{At} \\
        & \quad \mbox{iff} \quad  
        \mbox{$\proves_{\base{A}'} 
 \flatop{(\psi[X \mapsto P])}$ for any appropriate $P \in \setPredCons$} \tag{Lemma ~\ref{lem:pre-dagger}} \\
         & \quad \mbox{iff} \quad   
         \mbox{$\supp_{\base{A}'} (\psi[X \mapsto P])$ for any appropriate $P \in \setPredCons$}  \tag{Induction Hypothesis} \\
        & \quad \mbox{iff} \quad   
        \mbox{$\supp_{\base{A}'} 
 \foralltwo X\! \psi$} \tag{$\foralltwo$} 
        \end{align}
        The other cases have essentially been given by Gheorghiu~\cite{Gheorghiu2025fol}.
\end{proof}

\begin{lemma} \label{lem:natural}
    The following hold:
    \begin{itemize}
        \item[--]If $\atset{P} \proves_{\base{J}} P$, then $\natop{\atset{P}} \proves_{\calculus{HI}} \natop{P}$.
        \item[--]If $\atset{P} \proves_{\base{K}} P$, then $\natop{\atset{P}} \proves_{\calculus{HC}} \natop{P}$.
    \end{itemize}
\end{lemma}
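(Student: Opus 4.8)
The plan is to prove both statements uniformly by induction on the structure of the derivation witnessing $\atset{P} \proves_{\base{N}} P$, as given in Definition~\ref{def:der-base}, taking $\base{N} = \base{J}$ in the intuitionistic case and $\base{N} = \base{K}$ in the classical case; write $\calculus{H}$ for $\calculus{HI}$ in the former and $\calculus{HC}$ in the latter. The two arguments run in parallel: the only difference is that $\base{K}$ additionally contains the rule $\flatop{(\dne)}$, whose naturalization is the axiom $\dne$, which is present in $\calculus{HC}$ but absent from $\calculus{HI}$. Throughout I exploit that $\natop{(-)}$ is a left-inverse of the injection $\flatop{(-)}$, so that $\natop{\flatop{\chi}} = \chi$ for every $\chi \in \Xi^\ast$.

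For the base case \textsc{(Ref)}, if $P \in \atset{P}$ then $\natop{P} \in \natop{\atset{P}}$, so $\natop{\atset{P}} \proves_{\calculus{H}} \natop{P}$ by \textsc{Hypothesis}. For the inductive case \textsc{(App)}, the last rule applied is an instance of one of the atomic rules of Figure~\ref{fig:simulation}, and I case-split accordingly. For each axiom schema ($\textsc{K}$, $\textsc{S}$, $\ern\forall$, $\ern\foralltwo$, $\irn\neg$, $\efq$, and, in the classical case, $\dne$), the rule is zero-premise and its conclusion is $\flatop{\alpha}$ for the corresponding axiom $\alpha$; since $\natop{\flatop{\alpha}} = \alpha$ is an axiom of the calculus, $\natop{\atset{P}} \proves_{\calculus{H}} \alpha$ follows by \textsc{Axiom} for any context. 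For the rule $\flatop{\phi}, \flatop{(\phi \to \psi)} \Rightarrow \flatop{\psi}$, the induction hypothesis applied to the two sub-derivations yields $\natop{\atset{P}} \proves_{\calculus{H}} \phi$ and $\natop{\atset{P}} \proves_{\calculus{H}} \phi \to \psi$, whence $\natop{\atset{P}} \proves_{\calculus{H}} \psi$ by \textsc{Modus Ponens}.

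The cases requiring care are the generalization rules $\irn\forall$ and $\irn\foralltwo$, which I expect to be the main obstacle, since they carry eigenvariable side conditions that must be reconciled with those of the Hilbert calculus. Consider $\irn\forall$: its premise is $\flatop{(\phi \to \psi)}$ and its conclusion is $\flatop{(\phi \to \forall x\, \psi[e \mapsto x])}$, subject to $x \notin \FIV{\phi}$ and $e$ not occurring in $\phi$. The induction hypothesis gives $\natop{\atset{P}} \proves_{\calculus{H}} \phi \to \psi$. The plan is to invoke Proposition~\ref{prop:eigenvariable} to abstract the eigen-constant $e$ into a fresh individual-variable $x$, obtaining $\natop{\atset{P}} \proves_{\calculus{H}} (\phi \to \psi)[e \mapsto x]$; since $e$ does not occur in $\phi$, this is $\phi \to \psi[e \mapsto x]$. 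An application of \textsc{First-order Generalization} then yields $\natop{\atset{P}} \proves_{\calculus{H}} \phi \to \forall x\, \psi[e \mapsto x]$, as required. The rule $\irn\foralltwo$ is handled \emph{mutatis mutandis}, abstracting the eigen-predicate $E$ via the predicate half of Proposition~\ref{prop:eigenvariable} and closing with \textsc{Second-order Generalization}.

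The delicate point in these two cases — and the crux of the lemma — is discharging the freshness hypotheses on which Proposition~\ref{prop:eigenvariable} and the generalization rules depend. Applying Proposition~\ref{prop:eigenvariable} requires that $e$ (resp. $E$) not occur in the context $\natop{\atset{P}}$, while the subsequent generalization requires $x \notin \FIV{\phi, \natop{\atset{P}}}$ (resp. $X \notin \FPV{\phi, \natop{\atset{P}}}$). The variable $x$ (resp. $X$) is fresh by construction, so the latter is immediate; the former rests on the fact that eigen-constants and eigen-predicates are, by definition of $\eigeninds$ and $\eigenpreds$, exactly those not occurring in the original data $\Gamma, \gamma$, together with the rule's stipulation that $e$ (resp. $E$) not occur in $\phi$. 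I would therefore make the bookkeeping explicit, tracking that in the intended application $\natop{\atset{P}}$ arises as a naturalization of flattened formulae over $\Xi^\ast$ that is eigen-free, so that each eigen-constant introduced at a generalization step is fresh relative to the naturalized context and the side conditions transfer cleanly.
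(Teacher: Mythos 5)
Your proposal follows essentially the same route as the paper's proof: induction on derivability in the base with a case analysis on the last rule of Figure~\ref{fig:simulation}, using \textsc{Hypothesis} for \textsc{(Ref)}, \textsc{Axiom} for the flattened axiom schemas, \textsc{Modus Ponens} for $\flatop{(\mp)}$, and Proposition~\ref{prop:eigenvariable} followed by \textsc{First-order} (resp.\ \textsc{Second-order}) \textsc{Generalization} for $\flatop{(\irn \forall)}$ and $\flatop{(\irn \foralltwo)}$. If anything, your final paragraph is more careful than the paper itself, whose proof applies Proposition~\ref{prop:eigenvariable} without verifying that $e$ (resp.\ $E$) does not occur in $\natop{\atset{P}}$ and that $x$ (resp.\ $X$) is not free there --- exactly the bookkeeping you flag, which in effect requires restricting the lemma to sets $\atset{P}$ whose naturalization is eigen-free and closed, a condition that does hold in the lemma's only use, namely Theorem~\ref{thm:completeness}, where $\atset{P} = \flatop{\Gamma}$ and $\natop{\big(\flatop{\Gamma}\big)} = \Gamma$ contains no eigen-constants or eigen-predicates by the definition of $\eigeninds$ and $\eigenpreds$.
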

\begin{proof}
    We proceed by induction on provability in a base:
    \begin{itemize}
        \item[--](\textsc{ref}). It must be that $P \in \atset{P}$. Hence, $\natop{\atset{P}} \proves_{\calculus{NK}} \natop{P}$ by \textsc{hypothesis}.
        \item[--](\textsc{app}). We proceed by case analysis on the last atomic rule used:
        \begin{itemize}
            \item[--]For the axioms $\flatop{(K)}$, $\flatop{(S)}$, $\flatop{(\ern{\forall})}$, $\flatop{(\ern{\foralltwo})}$, $\flatop{\irn \neg}$, $\flatop{\efq} \in \base{J} \subseteq \base{K}$, the result is immediate by \textsc{axiom} using ${(K)}$, ${(S)}$, ${(\ern{\forall})}$, ${(\ern{\foralltwo})}$, ${\irn \neg}$, ${\efq} \in \calculus{HI} \subseteq \calculus{HC}$, respectively. 
            \item[--]For the axiom $\flatop{(\dne)} \in \base{K}$, the result is immediate by ${\dne} \in \calculus{HC}$.
            \item[--] If the last rule used was $\flatop{(\mp)} \in \base{A} = \base{J}$ (resp. $\base{A} = \base{K}$), then $P = \flatop{\psi}$ for some $\psi \in \Xi^\ast$, and there is $\flatop{\phi} \in \Xi^\ast$ such that $\atset{P} \proves_{\base{A}} \flatop{(\phi \to \psi)}$ and $\atset{P} \proves_{\base{A}} \flatop{\phi}$.  By the induction hypothesis (IH), $\natop{\atset{P}} \proves_{\calculus{A}}\phi \to \psi$ and  $\natop{\atset{P}} \proves_{\calculus{A}}\phi$ for $\calculus{A} = \calculus{HI}$  (resp. $\textsc{A} = \calculus{HI}$). Observe that $\natop{P} = \phi$. The desired result follows by \textsc{modus ponens}.
            \item[--]If the last rule used was $\flatop{(\irn \forall)} \in \base{A} = \base{J}$ (resp. $\base{A} = \base{K}$), then $P = \flatop{(\phi \to \forall x \psi[e \mapsto x])}$ for some $\phi, \psi \in \Xi^\ast$, $e \in \eigeninds$ such that $e$ does not occur in $\phi$, and $x \in \setIndVar$ such that $x \not \in \FIV{\phi}$. Moreover, $\atset{P} \proves_{\base{A}} \flatop{(\phi \to \psi)}$. By the IH, $\natop{\atset{P}} \proves_{\calculus{A}} \phi \to \psi$ for $\calculus{A} = \calculus{HI}$ (resp. $\calculus{A} = \calculus{HC}$). By Proposition~\ref{prop:eigenvariable}, it follows that  $\natop{\atset{P}} \proves_{\calculus{A}} (\phi \to \psi)[e \mapsto x]$. The desired result follows by \textsc{first-order generalization}. 
            \item[--]If the last rule used was $\flatop{(\irn \foralltwo)} \in \base{A} = \base{J}$ (resp. $\base{A} = \base{K}$), the result follows \emph{mutatis mutandis} on the previous case. 
        \end{itemize}
        This completes the case analysis. 
    \end{itemize}
    This completes the induction. 
\end{proof}

\begin{theorem}[Completeness] \label{thm:completeness}
 Let $\basis{B}=\basis{C}$ (resp. $\basis{B} = \basis{I}$) and $\calculus{N}=\calculus{HC}$ (resp. $\calculus{N}=\calculus{HI}$). Let $\Gamma$ be a finite set of sentences and $\phi$ a sentence. If $\Gamma \supp \phi$, then $\Gamma \proves_{\calculus{N}} \phi$.
\end{theorem}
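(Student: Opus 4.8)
The plan is to assemble Lemmas~\ref{lem:dagger}, \ref{lem:pre-dagger}, and \ref{lem:natural} into a chain that carries the semantic hypothesis $\Gamma \supp \phi$ down to atomic derivability in the tailored base, and then across to Hilbert provability via the left-inverse $\natop{(-)}$. Throughout I fix $\basis{B} = \basis{C}$ and $\base{N} = \base{K}$, the intuitionistic case being identical with $\basis{I}$ and $\base{J}$. I first observe that every rule in Figure~\ref{fig:simulation} is zero- or first-level, so $\base{N} \in \basis{B}$ and the base-extensions $\base{N}' \basesup{\basis{B}} \base{N}$ invoked by the lemmas are legitimate. With this in hand, the final clause of Figure~\ref{fig:support} lets me specialize the hypothesis immediately: $\Gamma \supp \phi$ yields $\Gamma \supp_{\base{N}} \phi$, which by (Inf) says that for every $\base{N}' \basesup{\basis{B}} \base{N}$, if $\supp_{\base{N}'} \psi$ for all $\psi \in \Gamma$ then $\supp_{\base{N}'} \phi$.

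The key step, and the one I expect to be the main obstacle, is transferring this statement to the flattened atoms, namely establishing $\flatop{\Gamma} \supp_{\base{N}} \flatop{\phi}$. By (Inf), this reduces to showing that for every $\base{N}' \basesup{\basis{B}} \base{N}$, if $\supp_{\base{N}'} \flatop{\psi}$ for all $\psi \in \Gamma$ then $\supp_{\base{N}'} \flatop{\phi}$. Here I would apply Lemma~\ref{lem:dagger} pointwise inside the quantifier over extensions, replacing each $\supp_{\base{N}'} \flatop{\psi}$ by $\supp_{\base{N}'} \psi$ and $\supp_{\base{N}'} \flatop{\phi}$ by $\supp_{\base{N}'} \phi$; the claim then becomes exactly the unfolded form of $\Gamma \supp_{\base{N}} \phi$ secured above. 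The delicate point is that the equivalence must be available not merely at $\base{N}$ but at every $\base{N}' \basesup{\basis{B}} \base{N}$ simultaneously — which is precisely why Lemma~\ref{lem:dagger} is stated uniformly over all such $\base{N}'$ rather than at the single base $\base{N}$.

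It then remains to descend from support to atomic derivability and cross into the calculus. Since each $\flatop{\psi}$ and $\flatop{\phi}$ lies in $\setPredCons_0 \subseteq \setClosedAtom$, clause (At) identifies $\supp_{\base{N}'} \flatop{\chi}$ with $\proves_{\base{N}'} \flatop{\chi}$, so $\flatop{\Gamma} \supp_{\base{N}} \flatop{\phi}$ unfolds to: for every $\base{N}' \basesup{\basis{B}} \base{N}$, if $\proves_{\base{N}'} \flatop{\psi}$ for all $\psi \in \Gamma$ then $\proves_{\base{N}'} \flatop{\phi}$. This is exactly the right-hand side of Lemma~\ref{lem:pre-dagger}(iv) with $\atset{P} = \flatop{\Gamma}$ and $P = \flatop{\phi}$, whence $\flatop{\Gamma} \proves_{\base{N}} \flatop{\phi}$. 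Applying Lemma~\ref{lem:natural} gives $\natop{(\flatop{\Gamma})} \proves_{\calculus{N}} \natop{(\flatop{\phi})}$. Finally, because $\Gamma$ and $\phi$ are built only from the predicates occurring in them they belong to $\Xi^\ast$, and since $\natop{(-)}$ is a left-inverse of $\flatop{(-)}$ on $\Xi^\ast$ we have $\natop{(\flatop{\psi})} = \psi$ for each formula $\psi$; hence $\natop{(\flatop{\Gamma})} = \Gamma$ and $\natop{(\flatop{\phi})} = \phi$, delivering $\Gamma \proves_{\calculus{N}} \phi$ as required.
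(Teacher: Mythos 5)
Your proof is correct and follows essentially the same route as the paper's: specialize $\Gamma \supp \phi$ to the simulation base via (Inf), transfer to flattened atoms by Lemma~\ref{lem:dagger} applied uniformly over extensions, descend to atomic derivability via Lemma~\ref{lem:pre-dagger}(iv), cross to the Hilbert calculus by Lemma~\ref{lem:natural}, and conclude using the left-inverse $\natop{(-)}$. Your explicit check that the simulation base belongs to the basis (all its rules being zero- or first-level) is a detail the paper leaves implicit, but otherwise the argument is the same.
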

\begin{proof}
    Assume $\Gamma \neq \emptyset$, the case $\Gamma = \emptyset$ being similar. Let $\base{N}$ be its simulation base. We reason as follows:
   \begin{align}
     \Gamma \supp \phi &\qquad\mbox{implies}\qquad \mbox{for any $\base{N}' \basesup{\basis{B}} \base{N}$, if $\supp_{\base{N}'} \psi$ for $\psi \in \Gamma$, then $\supp_{\base{N}'} \phi$} \tag{Inf} \\
     &\qquad\mbox{implies}\qquad \mbox{for any $\base{N}' \basesup{\basis{B}} \base{N'}$, if $\proves_{\base{N}'} P$ for $P \in \flatop{\Gamma}$, then $\proves_{\base{N}'} \phi^\flat$} \tag{Lemma~\ref{lem:dagger}}  \\
    &\qquad\mbox{implies}\qquad \flatop{\Gamma} \proves_{\base{N}} \flatop{\phi} \tag{Lemma~\ref{lem:pre-dagger}} \\     &\qquad\mbox{implies}\qquad\natop{\big(\flatop{\Gamma}\big)} \proves_{\mathsf{N}} \natop{\big(\flatop{\phi}\big)} \tag{Lemma~\ref{lem:natural}} \\
      &\qquad\mbox{implies}\qquad\Gamma \proves_{\mathsf{N}} \phi \notag
   \end{align}
   The last step follows from the definition of $\natop{(-)}$ as a left-inverse of $\flatop{(-)}$. 
\end{proof}

% \section{First-order Embedding}

% \todo{xxx discusss van Dalen's encoding of sol in fol}

\section{Equivalence with Henkin's Second-order Logic}~\label{sec:henkin}

Henkin’s S-oL~\cite{henkin1950completeness} can be characterized by the natural deduction system $\calculus{NC}$ (Figure~\ref{fig:natural} including $\dne$). Let $\calculus{NI}$ denote the intuitionistic counterpart consisting of all the rules in Figure~\ref{fig:natural} excluding $\dne$. Of course, for $\irn \forall$ and $\irn \foralltwo$ it is important that $x$ and $X$ do not occur in any of the hypotheses of the sub-derivation for $\phi$. 

In this section, we will show that derivability in $\calculus{NI}$ and $\calculus{NC}$ corresponds to derivability in $\calculus{HI}$ and $\calculus{HC}$, respectively. In Section~\ref{sec:pt}, we showed that $\calculus{HI}$ and $\calculus{HC}$ were previously shown to axiomatize the B-eS for S-oL given by $\basis{C}$ and $\basis{K}$, respectively. Therefore, this section shows that that the P-tS in this paper recovers Henkin’s S-oL.

\begin{figure}
    \centering
    \hrule \vspace{2mm}
    \[
    \begin{array}{cccc}
    \infer[\irn{\to\!}]{A \to B}{\infer*{B}{[A]}} &    \infer[\irn{\forall}]{\forall x \phi}{\phi} &  \infer[\irn{\foralltwo}]{\foralltwo X \phi}{ \phi} & \\[2mm]
    \infer[\ern{\to\!}]{B}{A \to B & A} & 
    \infer[\ern{\forall}]{\phi[x \mapsto a]}{\forall x\phi}
& 
    \infer[\ern{\foralltwo}]{\foralltwo X \phi}{\phi[X \mapsto P]} 
    &
    \infer[\efq]{\phi}{\bot}
\end{array} 
\]
\dotfill
\vspace{1mm}
\[
    \infer{\phi}{\neg \neg \phi}
\]
    \hrule
    \caption{Natural Deduction System(s) $\mathsf{NI}$ and $\mathsf{NC}$}
    \label{fig:natural}
\end{figure}

\begin{proposition}
    The natural deduction systems are at least as expressive as the Hilbert calculi:
    \begin{itemize}
        \item[--]If $\Gamma \proves_{\calculus{HI}} \phi$, then $\Gamma \proves_{\calculus{NI}} \phi$.
        \item[--]If $\Gamma \proves_{\calculus{HC}} \phi$, then $\Gamma \proves_{\calculus{NC}} \phi$.
    \end{itemize}
\end{proposition}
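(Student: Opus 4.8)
The plan is to argue by induction on the inductive definition of the Hilbert consequence relation $\proves_{\calculus{HI}}$ (resp. $\proves_{\calculus{HC}}$) given in Definition~\ref{def:consequence}. Since every rule of $\calculus{NI}$ is also a rule of $\calculus{NC}$, it suffices to treat the two claims in parallel, with the single additional axiom $\dne$ of $\calculus{HC}$ handled using the $\dne$-rule available only in $\calculus{NC}$. Thus I would fix $\calculus{A} \in \{\calculus{HI}, \calculus{HC}\}$ with corresponding natural deduction system $\calculus{N} \in \{\calculus{NI}, \calculus{NC}\}$ and show that each clause building $\Gamma \proves_{\calculus{A}} \phi$ is matched by a construction in $\calculus{N}$.

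For the base cases, the \textsc{Hypothesis} clause is immediate, as $\phi \in \Gamma$ is a leaf of an $\calculus{N}$-derivation. For the \textsc{Axiom} clause I would exhibit, once and for all, an $\calculus{N}$-derivation of each axiom schema from no open assumptions. These are the familiar derivations: $\textsc{K}$ and $\textsc{S}$ use only $\irn{\to}$ and $\ern{\to}$; $\ern{\forall}$ (resp. $\ern{\foralltwo}$) assumes the quantified formula, applies the corresponding elimination rule, and discharges; $\irn{\neg}$ assumes the three antecedents, chains applications of $\ern{\to}$ to reach $\bot$, and then discharges; $\efq$ derives $\bot$ and applies the $\efq$-rule; and, in the classical case only, $\neg\neg\phi \to \phi$ is obtained by assuming $\neg\neg\phi$, applying the $\dne$-rule, and discharging. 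The \textsc{Modus Ponens} clause is then discharged by the induction hypothesis together with a single application of $\ern{\to}$ joining the derivations of $\phi$ and $\phi \to \psi$.

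The substantive cases are the two generalization clauses. Suppose $\Gamma \proves_{\calculus{A}} \psi \to \phi$ with $x \notin \FIV{\psi, \Gamma}$; by the induction hypothesis there is an $\calculus{N}$-derivation $\mathcal{D}$ of $\psi \to \phi$ whose open assumptions lie in $\Gamma$. I would build a derivation of $\psi \to \forall x \phi$ by assuming $\psi$, deriving $\phi$ from $\mathcal{D}$ and the assumption via $\ern{\to}$, applying $\irn{\forall}$ to obtain $\forall x \phi$, and finally discharging $\psi$ with $\irn{\to}$. The crucial point is that at the moment $\irn{\forall}$ is applied the undischarged assumptions are exactly those of $\mathcal{D}$ (a subset of $\Gamma$) together with $\psi$; the Hilbert side condition $x \notin \FIV{\psi, \Gamma}$ is therefore precisely what licenses the eigenvariable restriction on $\irn{\forall}$. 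The second-order case is identical \emph{mutatis mutandis}, with $\irn{\foralltwo}$ in place of $\irn{\forall}$ and the condition $X \notin \FPV{\psi, \Gamma}$.

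The main obstacle is the bookkeeping of open assumptions in the generalization step: I must state the induction hypothesis so that the $\calculus{N}$-derivation it supplies has all of its undischarged hypotheses drawn from $\Gamma$, so that after adjoining the single assumption $\psi$ the eigenvariable condition of $\irn{\forall}$ (resp. $\irn{\foralltwo}$) can be verified directly from the Hilbert side condition. No genuinely hard mathematics is involved; the content lies entirely in aligning the discharge discipline of natural deduction with the syntactic provisos of the generalization rules.
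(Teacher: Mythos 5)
Your proposal is correct and follows essentially the same route as the paper's own proof: induction on the clauses of Definition~\ref{def:consequence}, with explicit $\calculus{N}$-derivations of each axiom schema, and the generalization cases handled by adjoining the assumption $\psi$, applying $\ern{\to}$ to the derivation supplied by the induction hypothesis, then $\irn{\forall}$ (resp.\ $\irn{\foralltwo}$) --- licensed exactly by the side condition $x \notin \FIV{\psi, \Gamma}$ (resp.\ $X \notin \FPV{\psi, \Gamma}$) --- and discharging with $\irn{\to}$. The only difference is that you also spell out the \textsc{Modus Ponens} case, which the paper's proof leaves implicit.
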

\begin{proof}
    Let $\calculus{H}$ be either $\calculus{HI}$ or $\calculus{HC}$. We proceed by induction on derivability in $\calculus{H}$ to show the result:
    \begin{itemize}
        \item[--]\textsc{axiom}. As $\calculus{NI} \subseteq \calculus{NC}$, it suffices to show $\proves_\calculus{NI} \alpha$ for $\alpha \in \calculus{HI}$ and $\proves_\calculus{NC} \neg \neg \phi \to \phi$. The latter is immediate by $\dne,\irn \to \in \calculus{NC}$, so we concentrate on the former, which we show by case analysis:
        \begin{itemize}
            \item[--]$(K)$. We have the following $\calculus{NI}$-derivation:
            \[
            \infer[\irn \to]{\phi \to (\psi \to \phi)}{
                \infer[\irn \to]{\psi \to \phi}{
                    [\phi] & [\psi]
                }
            }
            \]
            \item[--]$(S)$. We have the following $\calculus{NI}$-derivation:
            \[
            \infer[\irn \to]{\big( \phi \to (\psi \to \chi)\big) \to \big( (\phi \to \psi) \to (\phi \to \chi) \big)}{
                \infer[\irn \to]{(\phi \to \psi) \to (\phi \to \chi)}{
                    \infer[\irn \to]{\phi \to \chi}{
                        \infer{\chi}{
                            \infer[\ern \to]{\psi \to \chi}{[\phi \to (\psi \to \chi)] & [\phi]}
                            &
                            \infer[\ern \to]{\psi}{[\phi \to \psi] & [\phi]}
                        }  
                    }
                }
            }
            \]
            \item[--]$(\ern \forall)$. Immediate by $\ern \forall, \irn \to \in \calculus{NI}$.
            \item[--]$(\ern \foralltwo)$.  Immediate by $\ern \foralltwo, \irn \to \in \calculus{NI}$.
            \item[--]$(\irn \neg)$. We have the following $\calculus{NI}$-derivation:
            \[
            \infer[\irn \to]{(\phi \to \psi) \to \big( (\phi \to \neg \psi) \to \neg \phi \big)}{
                \infer[\irn \to]{(\phi \to \neg \psi) \to \neg \phi}{
                    \infer[\irn \to]{\neg \phi}{
                        \infer[\ern \to]{\bot}{
                            \infer[\ern \to]{\neg \psi}{[\phi \to \neg\psi] & [\phi]}
                            &
                            \infer[\ern \to]{\psi}{[\phi \to \psi] & [\phi]}
                        }
                    }
                }
            }
            \]
            \item[--]$(\efq)$. We have the following $\calculus{NI}$-derivation:
            \[
            \infer[\irn \to]{(\phi \to \bot) \to (\phi \to \psi)}{
                \infer[\irn \to]{\phi \to \psi}{
                    \infer[\efq]{\psi}{
                        \infer[\ern \to]{\bot}{
                        [\phi \to \bot]
                        &
                        [\phi]
                        }
                    }
                }
            }
            \]
        \end{itemize}
        \item[--]\textsc{hypothesis}. We require to show that if $\phi \in \Gamma$, then $\Gamma \proves_{\calculus{NI}} \phi$. This is witnessed by the tree of one node labelled $\phi$.
        \item[--]\textsc{first-order generalization}. Assume $\Gamma \proves_{\calculus{H}} \psi \to \phi$ such that $x \not \in \FIV{\phi, \Gamma}$. We require to show $\Gamma \proves_{\calculus{NI}} \psi \to \forall x \phi$. Let $\mathsf{L}$ be an $\calculus{NI}$-derivation witnessing the assumption, then we show the desired result by the following:
        \[
        \infer[\irn \to]{\psi \to \forall x \phi}{
            \infer[\irn \to]{\forall x \phi}{
                \infer[\ern \to]{\phi}{
                    \deduce{\psi \to \phi}{\mathsf{L}}
                    &
                    \psi
                }
            }
        }
        \]
        \item[--]\textsc{second-order generalization}. This follows \emph{mutatis mutandis} on the previous case. 
     \end{itemize}
\end{proof}

\begin{proposition}
    The Hilbert calculi are at least as expressive as the natural deduction systems:
    \begin{itemize}
        \item[--]If $\Gamma \proves_{\calculus{NI}} \phi$, then $\Gamma \proves_{\calculus{HI}} \phi$.
        \item[--]If $\Gamma \proves_{\calculus{NC}} \phi$, then $\Gamma \proves_{\calculus{HC}} \phi$.
    \end{itemize}
\end{proposition}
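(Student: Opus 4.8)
The plan is to induct on the structure of the $\calculus{NI}$- (resp. $\calculus{NC}$-) derivation witnessing $\Gamma \proves_{\calculus{NI}} \phi$ (resp. $\Gamma \proves_{\calculus{NC}} \phi$), reading off $\Gamma$ at each node as the set of \emph{undischarged} assumptions of the sub-derivation rooted there, and writing $\calculus{H}$ for the relevant Hilbert calculus. Both statements are handled simultaneously: every rule of $\calculus{NI}$ is a rule of $\calculus{NC}$, so the only additional case for the classical pair is the $\dne$ rule, matched by the axiom $(\dne) \in \calculus{HC}$. The single indispensable tool is the Deduction Theorem, Proposition~\ref{prop:deduction-theorem}, which converts the hypothesis-discharging behaviour of natural deduction into the flat, modus-ponens-driven derivations of the Hilbert calculi.

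The routine cases are the base case and the elimination rules. If the derivation is a single leaf $\phi \in \Gamma$, then $\Gamma \proves_{\calculus{H}} \phi$ by \textsc{Hypothesis}. For $\ern{\to}$, the induction hypothesis gives $\Gamma \proves_{\calculus{H}} A \to B$ and $\Gamma \proves_{\calculus{H}} A$, and \textsc{Modus Ponens} yields $\Gamma \proves_{\calculus{H}} B$. For $\ern{\forall}$ (resp.\ $\ern{\foralltwo}$), the induction hypothesis gives $\Gamma \proves_{\calculus{H}} \forall x\, \phi$ (resp.\ $\Gamma \proves_{\calculus{H}} \foralltwo X\!\, \phi$); instantiating axiom $(\ern\forall)$ (resp.\ $(\ern\foralltwo)$) and applying \textsc{Modus Ponens} delivers the instance. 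The $\dne$ rule is handled identically using axiom $(\dne)$.

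The substantive cases are implication introduction, the quantifier introductions, and $\efq$. For $\irn{\to}$ with conclusion $A \to B$, the premise sub-derivation of $B$ has undischarged assumptions among $\Gamma, A$; the induction hypothesis gives $\Gamma, A \proves_{\calculus{H}} B$, and Proposition~\ref{prop:deduction-theorem} immediately yields $\Gamma \proves_{\calculus{H}} A \to B$ (weakening in $A$ if it was not actually discharged). For $\irn{\forall}$, the induction hypothesis gives $\Gamma \proves_{\calculus{H}} \phi$ together with the natural-deduction side condition $x \notin \FIV{\Gamma}$; since the Hilbert calculus offers generalization only in implicational form, I fix a closed theorem $\top$ (for instance $P \to P$ with $P \in \setPredCons_0$, derivable from $(\textsc{K})$ and $(\textsc{S})$), derive $\Gamma \proves_{\calculus{H}} \top \to \phi$ by axiom $(\textsc{K})$ and \textsc{Modus Ponens}, apply \textsc{First-order Generalization} (legitimate since $x \notin \FIV{\top, \Gamma}$) to obtain $\Gamma \proves_{\calculus{H}} \top \to \forall x\, \phi$, and conclude $\Gamma \proves_{\calculus{H}} \forall x\, \phi$ by \textsc{Modus Ponens}. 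The case $\irn{\foralltwo}$ is identical using \textsc{Second-order Generalization} and $X \notin \FPV{\Gamma}$. For $\efq$, the induction hypothesis gives $\Gamma \proves_{\calculus{H}} \bot$; instantiating axiom $(\efq)$ at $\bot$ gives $(\bot \to \bot) \to (\bot \to \psi)$, and as $\bot \to \bot$ is a theorem, two applications of \textsc{Modus Ponens} yield $\Gamma \proves_{\calculus{H}} \psi$.

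I expect the main obstacle to be bookkeeping rather than any deep difficulty: as the induction descends into sub-derivations, $\Gamma$ must always be read as the \emph{current} undischarged assumptions, so that the eigenvariable side conditions of $\irn{\forall}$ and $\irn{\foralltwo}$ (that the generalized variable is free in no open hypothesis) coincide exactly with $x \notin \FIV{\Gamma}$ and $X \notin \FPV{\Gamma}$ as required by \textsc{First-order} and \textsc{Second-order Generalization}. The one design-sensitive step is bridging plain natural-deduction generalization with the implicational generalization of the Hilbert calculus via the auxiliary closed theorem $\top$; everything else is a direct translation through Proposition~\ref{prop:deduction-theorem} and the matching axioms.
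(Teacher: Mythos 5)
Your proof is correct, but it takes a genuinely different route from the paper's. The paper never performs a syntactic induction on natural deduction derivations: it invokes Theorem~\ref{thm:completeness} to reduce the claim to showing that the natural deduction rules are \emph{sound for the base-extension semantics} (that $\Gamma \proves_{\calculus{NI}} \phi$ implies $\Gamma \supp \phi$ over $\basis{I}$, and likewise for $\calculus{NC}$ over $\basis{C}$), and then discharges that soundness obligation by citing Sandqvist's and Gheorghiu's treatment of $\irn{\to}$, $\ern{\to}$, $\irn{\forall}$, $\ern{\forall}$ and $\dne$, with $\irn{\foralltwo}$ and $\ern{\foralltwo}$ handled \emph{mutatis mutandis}. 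Your argument is instead the classical, purely syntactic translation: induction on the derivation, with the Deduction Theorem (Proposition~\ref{prop:deduction-theorem}) absorbing discharge, the Hilbert axioms matching the elimination rules and $\dne$, and a closed theorem $\top$ bridging bare generalization with the implicational form of \textsc{First-} and \textsc{Second-order Generalization}; your bookkeeping points --- reading $\Gamma$ as the current undischarged assumptions, monotonicity of $\proves_{\calculus{H}}$ for vacuous discharge, the eigenvariable conditions coinciding with $x \notin \FIV{\Gamma}$ and $X \notin \FPV{\Gamma}$, and the instantiation of $(\efq)$ at $\bot$ --- are exactly the right ones, and you (correctly) read the $\ern{\foralltwo}$ rule of Figure~\ref{fig:natural} as eliminating a second-order quantifier into an instance, silently repairing an evident typo there. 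As for what each approach buys: yours is self-contained and somewhat more general, since it uses no semantic machinery and so is not restricted by the hypotheses of Theorem~\ref{thm:completeness} (finite $\Gamma$, all formulae sentences), which the paper's detour tacitly inherits; the paper's route, on the other hand, is only a few lines given the machinery already in place, and it yields as a by-product the soundness of $\calculus{NI}$ and $\calculus{NC}$ for the P-tS --- which is precisely the fact the section needs in order to conclude that the semantics recovers Henkin's second-order logic.
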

\begin{proof}
    By Theorem~\ref{thm:completeness}, it suffices to show that the natural deduction are sound for the B-eS of S-oL:
        \begin{itemize}
        \item[--]If $\Gamma \proves_{\calculus{NI}} \phi$, then $\Gamma \supp \phi$ over the basis $\basis{I}$.
        \item[--]If $\Gamma \proves_{\calculus{NC}} \phi$, then $\Gamma \supp \phi$ over the basis $\basis{C}$.
    \end{itemize}
    To this end, it suffices to show that $\supp$ respects the rules of $\calculus{NI}$ and $\calculus{NC}$ under basis $\basis{I}$ and $\basis{C}$, respectively. This has already be down by Sandqvist~\cite{Sandqvist2009CL,Sandqvist2015} (see also Gheorghiu~\cite{Gheorghiu2025fol}) for $\irn \to$, $\ern \to$, $\irn \forall$, $\ern \forall$ over both $\basis{I}$ and $\basis{C}$,  and for $\dne$ over $\basis{C}$. The remaining cases of showing that $\supp$ admits $\irn \foralltwo$ and $\ern \foralltwo$ follows \emph{mutatis mutandis} on the treatment of $\irn \forall$ and $\ern \forall$. 
\end{proof}

This completes the equivalence to Henkin's account of second-order logic. 

\section{Discussion}~\label{sec:discussion}

We have developed a proof-theoretic semantics for second-order logic in which both first- and second-order quantification are defined by substitution. The semantics is grounded in atomic systems --- pre-logical sets of inference rules --- that can be understood as encoding beliefs about the inferential relationships between concepts in the language. For example, if one accepts that `Tammy is a vixen' ($V(t)$), one may infer that `Tammy is female' ($Fe(t)$) and `Tammy is a fox' ($Fo(t)$), among possibly other consequences:
\[
    \infer{Fe(t)}{V(t)} \qquad \infer{Fo(t)}{V(t)}
\]
We have shown that, depending on the chosen notion of atomic system, this framework recovers different logics. In particular, we examined two such notions corresponding to classical and intuitionistic versions of Henkin’s second-order logic. This raises a question: Is there a suitable choice of atomic system that would recover full second-order logic? We leave this as an open direction for future work. 

At present, we may ask how to interpret the fact that our framework recovers Henkin semantics. V\"a\"an\"anen~\cite{Vaananen2001} has argued that despite some formal differences in their model-theoretic treatments, once second-order logic is formalized (as it must be for foundational purposes), the distinction between Henkin and full semantics effectively vanishes. As he puts it: `If two people started using second-order logic for formalizing mathematical proofs, person F with the full second-order logic and person H with the Henkin second-order logic, we would not be able to see any difference.' At the level of formal reasoning, then, there is no principled basis for preferring one over the other, making the appearance of Henkin semantics in our system entirely acceptable from the perspective of foundational logic.

Nonetheless, the distinction between Henkin and full semantics remains relevant for the broader programme of proof-theoretic semantics. In its two forms, second-order logic is connected in markedly different ways to model-theoretic and set-theoretic foundations of mathematics. These relationships remain largely unexplored within the proof-theoretic tradition. The account of second-order logic presented here offers a promising setting in which such connections can be examined more closely, drawing on both the existing meta-theory and the simplicity and flexibility of our semantic framework.

While B-eS provides an intuitively constructive semantic account of logical consequence, a clear understanding of its computational content, together with a systematic account of its relationship with P-tV for a given logic, remains to be established. The close relationship between intuitionistic second-order logic and the polymorphic lambda-calculus (see, for example, \cite{TroelstraSchwichtenberg2000}) suggests that B-eS for S-oL may provide a useful starting point for investigating these questions. 

Second-order arithmetic provides a compelling setting for formal metamathematics (see, e.g., \cite{Vaananen2001}). We conjecture that an inferentialist analysis of second-order arithmetic, induction principles, and the definability of concepts may provide a computational basis for a formal metamathematics grounded in the structure of atomic systems. This includes, for example, defining real numbers and analysis. Gheorghiu~\cite{gheorghiu2025pa} has shown that first-order proof-theoretic semantics gives a simple argument for the consistency of Peano Arithemtic. 

\bibliographystyle{abbrv}
\bibliography{bib}

\end{document}